\newcommand{\Stir}[2]{\genfrac{\{}{\}}{0pt}{}{#1}{#2}}
\newcommand{\RR}{\mathbb{R}}
\newcommand{\rt}{\mathrm{root}}
\newcommand{\inv}{\mathrm{inv}}
\newcommand{\Z}{\mathrm{Z}}
\newcommand{\V}{\mathrm{V}}
\newcommand{\D}{\mathrm{D}}
\renewcommand{\H}{\text{\normalfont H}}
\newcommand{\DZ}{\text{\normalfont D}_z}
\newcommand{\DQ}{\text{\normalfont D}_q}
\newcommand{\Arg}{\text{\normalfont Arg}}
\newcommand{\EE}[1]{\mathbb{E} \left( #1 \right) }
\newcommand{\PP}[1]{\mathbb{P} \left\{ #1 \right\} }
\newcommand{\ff}[1]{\underline{#1}}
\newcommand{\rf}[1]{\overline{#1}}
\newcommand{\OO}[1]{\mathcal{O} \left(#1\right)}
\newcommand{\oneplusOO}[1]{\left(1 + \OO{#1}\right)}
\newcommand{\convd}{\xrightarrow{(d)}}
\newcommand{\CC}{\mathbb{C}}
\newcommand{\ii}{\mathrm{i}}
\renewcommand{\gcd}[1]{\mathrm{gcd} \lbrace #1 \rbrace}
\renewcommand{\mod}{\,\mathrm{mod}\,}
\newcommand{\NN}{\mathbb{N}}
\newcommand{\T}{\mathcal{T}}
\newcommand{\hT}{\hat{\mathcal{T}}}
\newcommand{\set}[1]{\left\{ #1 \right\}}
\newcommand{\dcup}{\dot{\cup}}
\newcommand{\hI}{\hat{I}}
\newcommand{\nodeOne}{\text{\textbigcircle}\hspace*{-0.29cm}1\,}
\newtheorem{theorem}{Theorem}[section]
\newtheorem{lemma}[theorem]{Lemma}
\newtheorem{definition}[theorem]{Definition}
\newtheorem{prop}[theorem]{Proposition}
\begin{document}

\author[A.~Panholzer]{Alois Panholzer}
\address{Alois Panholzer\\
Institut f{\"u}r Diskrete Mathematik und Geometrie\\
Technische Universit\"at Wien\\
Wiedner Hauptstr. 8-10/104\\
A-1040 Wien, Austria}
\email{Alois.Panholzer@tuwien.ac.at}

\author[G.~Seitz]{Georg Seitz}
\address{Georg Seitz\\
Institut f{\"u}r Diskrete Mathematik und Geometrie\\
Technische Universit\"at Wien\\
Wiedner Hauptstr. 8-10/104\\
A-1040 Wien, Austria}
\email{Georg.Seitz@tuwien.ac.at}

\thanks{This work has been supported by the Austrian Science Foundation FWF, grant S9608-N23.}

\title[The number of inversions in labelled tree families]{Limiting distributions for the number of inversions in labelled tree families}

\begin{abstract}
  We consider so-called simple families of labelled trees, which contain, e.g., ordered, unordered, binary and cyclic labelled trees as special instances, and study the global and local behaviour of the number of inversions. In particular we obtain limiting distribution results for the total number of inversions as well as the number of inversions induced by the node labelled $j$ in a random tree of size $n$.
\end{abstract}

\date{\today}

\keywords{inversions, simply generated trees, limiting distributions}

\subjclass[2000]{05C05, 60F05, 05A16}

\maketitle

\section{Introduction}

Throughout this paper we always consider \emph{rooted trees} $T$ in which the vertices are \emph{labelled} with distinct integers of $\{1, \ldots, |T|\}$,  where $|T|$ 
is the size (i.e., the number of vertices) of $T$.
An \emph{inversion} in a tree $T$ is a pair $(i,j)$ of vertices (we may always identify a vertex with its label), such that $i > j$ and $i$ lies on the unique
path from the root node $\rt(T)$ of $T$ to $j$ (thus $i$ is an ascendant of $j$ or, equivalently, $j$ is a descendant of $i$).
Let us denote by $\inv(T)$ the number of inversions in $T$.

In \cite{Gessel-Sagan-Yeh_TreeInversions,Mallows-Riordan_InversionEnumerator} studies concerning the number of inversions in some important combinatorial tree families $\mathcal{T}$ have been given by introducing 
so-called \emph{tree inversion polynomials}. They shall be defined as follows\footnote{Later we consider weighted tree families and introduce extensions of this and further definitions.}:
\begin{equation*}
  J_{n}(q) := \sum_{T \in \mathcal{T}: |T|=n} q^{\inv(T)}.
\end{equation*}
Actually, unlike in our studies, in \cite{Gessel-Sagan-Yeh_TreeInversions,Mallows-Riordan_InversionEnumerator} the authors exclusively considered trees with the root node labelled $1$.
Thus, in order to avoid confusion, we introduce also the slightly modified polynomials $\hat{J}_{n}(q) := \sum_{\substack{T \in \mathcal{T}: \\ |T|=n \; \text{and} \; \rt(T)=1}} q^{\inv(T)}$.
For \emph{unordered trees}, i.e., trees, where one assumes that to each vertex there is attached a (possibly empty) set of children 
(thus there is no left-to-right ordering of the children of any node), Mallows and Riordan \cite{Mallows-Riordan_InversionEnumerator} could give an explicit formula for a suitable generating function of 
the corresponding tree inversion polynomials:
\begin{equation*}
  \exp\left(\sum_{n \ge 1} (q-1)^{n-1} \hat{J}_{n}(q) \frac{t^{n}}{n!}\right) = \sum_{n \ge 0} q^{\binom{n}{2}} \frac{t^{n}}{n!}.
\end{equation*}
Gessel et al. \cite{Gessel-Sagan-Yeh_TreeInversions} considered $\hat{J}_{n}(q)$ for three other tree families:
\begin{itemize}
\item \emph{Ordered trees:} one assumes that to each vertex there is attached a (possibly empty) sequence of children 
(thus there is a left-to-right ordering of the children of each node).
\item \emph{Cyclic trees:} ordered trees, where one assumes that 
cyclic rearrangements of the subtrees of any node give the same tree.
\item \emph{Plane trees:} ordered trees, where one assumes that cyclic rearrangements of the subtrees of the root node give the same tree.
\end{itemize}
Unlike for unordered trees, no explicit formul{\ae} for a suitable generating function of the tree inversion polynomial of the latter tree families could be given, but the authors provide
exact and asymptotic results for the evaluations of $\hat{J}_{n}(q)$ for the specific values $q=0, 1, -1$. In particular, $\hat{J}_{n}(0)$ enumerates so-called \emph{increasing trees},
i.e., trees, where each child node has a label larger than its parent node.

Besides these studies it seems natural to ask, for a given combinatorial family $\mathcal{T}$ of trees, questions about the ``typical behaviour'' of the number of inversions in a tree $T \in \mathcal{T}$
of size $n$. In a probabilistic setting we may introduce a random variable $I_{n}$, which counts the number of inversions of a \emph{random tree} of size $n$, i.e., a tree chosen uniformly at random
from all trees of the family $\mathcal{T}$ of size $n$. Of course, this more probabilistic point of view and the before-mentioned combinatorial approach are closely related. Let us denote by $T_{n}$ the number of trees
of $\mathcal{T}$ of size $n$. Then it holds 
\begin{equation*}
   J_{n}(q) = T_{n} \sum_{k \ge 0} \mathbb{P}\{I_{n}=k\} q^{k},
\end{equation*}
i.e., the probability generating function $p_{n}(q) := \sum_{k \ge 0} \mathbb{P}\{I_{n}=k\} q^{k}$ of the random variable $I_{n}$ is simply given by $p_{n}(q) = \frac{J_{n}(q)}{T_{n}}$, 
and it holds $T_{n,k} = T_{n} \mathbb{P}\{I_{n}=k\}$ for the number $T_{n,k} := [q^{k}] J_{n}(q)$ of trees of size $n$ with exactly $k$ inversions.

A main concern of this paper is to describe the asymptotic behaviour of the random variable $I_{n}$ for various important tree families by proving limiting distribution results.
In our studies of $I_{n}$ we use as tree-models so-called \emph{simply generated tree families} \cite{Flajolet-Sedgewick_AnalyticCombinatorics,Meir-Moon}, which contain many important combinatorial tree families,
such as the before-mentioned unordered trees, ordered trees, and cyclic trees, but also others such as, e.g., binary trees, $t$-ary trees, and Motzkin trees, as special instances.
Simply generated trees are weighted ordered trees, where, given a degree-weight sequence, each node gets a weight according to its out-degree, i.e., the number of its children 
(see Subsection~\ref{ssec21} for a precise definition); we remark that in probability theory such tree models are known as Galton-Watson trees.
As a main result we can show, provided the degree-weight sequence satisfies certain mild growth conditions (which are all satisfied for the before-mentioned tree families),
that, after a suitable normalization of order $n^{\frac{3}{2}}$, $I_{n}$ converges in distribution to a distribution known as the Airy distribution
(see Subsection~\ref{ssec32}). We remark that the Airy distribution also appears in enumerative studies 
of other combinatorial objects such as, e.g., the area below lattice paths \cite{Louchard_BrownianExcursion}, the area of staircase polygons \cite{Schwerdtfeger-Richard-Thatte_StaircasePolygons}, sums of parking functions \cite{Kung-Yan_SumsParkingFunctions}, and the costs of linear probing hashing algorithms \cite{Flajolet-Poblete-Viola_LinearProbingHashing}.
For the particular tree family of unordered trees this limiting distribution result for $I_{n}$ has been shown already by Flajolet et al.\ in \cite{Flajolet-Poblete-Viola_LinearProbingHashing}
during their analysis of a linear probing hashing algorithm by using close relations between the insertion costs of this algorithm and the number of inversions in unordered trees.
We note that we show convergence in distribution, thus obtaining asymptotic results for $\mathbb{P}\{I_{n} \le x n^{\frac{3}{2}}\}$ or alternatively for the sums $\sum_{k \le x n^{\frac{3}{2}}} T_{n,k}$, 
with $x \in \RR^{+}$, but we do not obtain local limit laws, i.e., results concerning the behaviour of the probabilities $\mathbb{P}\{I_{n} = k\}$ or the numbers $T_{n,k}$ itself.

Besides this ``global study'' of the number of inversions in a random tree we are additionally interested in the contribution to this quantity induced by a specific label $j$, i.e., in a ``local study''. 
To do this we introduce random variables $I_{n,j}$, which count the number of inversions of the kind $(i,j)$, with $i > j$ an ancestor of $j$, in a random tree of size $n$.
Of course, one could also introduce ``local inversion polynomials'' $J_{n,j}(q) := T_{n} \sum_{k \ge 0} \mathbb{P}\{I_{n,j}=k\} q^{k}$. Note that $I_{n} = \sum_{j=1}^{n} I_{n,j}$, but the random variables $I_{n,j}$ are highly dependent.
In our studies we describe the asymptotic behaviour of the random variable $I_{n,j}$, depending on the growth of $j=j(n)$ with respect to $n$. In particular, we obtain that for the main portion of labels, i.e., for $j \ll n-\sqrt{n}$, $I_{n,j}$ converges, after suitable normalization of order $\sqrt{n}$, in distribution to a Rayleigh distribution. We remark that the Rayleigh distribution also appears frequently when studying combinatorial objects, see, e.g., \cite{Flajolet-Sedgewick_AnalyticCombinatorics}. If $n-j \sim \rho \sqrt{n}$ or $n-j = o(\sqrt{n})$ then the behaviour changes.
Apart from asymptotic results, we can for two particular tree families, namely ordered and unordered trees, also give explicit formul{\ae} for the probabilities $\mathbb{P}\{I_{n,j}=k\}$.
An example of a labelled tree and the parameters studied is given in Figure~\ref{fig:Example}
\begin{figure}
\begin{center}
  \includegraphics[height=3.5cm]{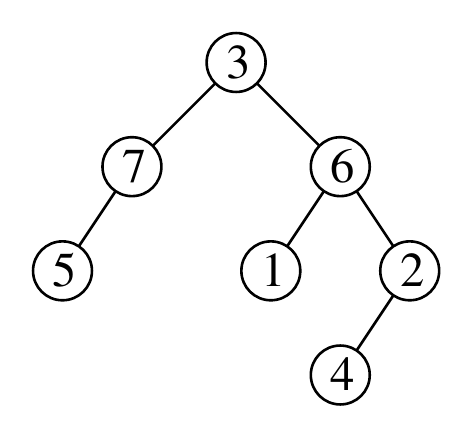}
\end{center}
\caption{A binary labelled tree of size $7$ with a total number of $6$ inversions, namely $(3,1)$, $(6,1)$, $(3,2)$, $(6,2)$, $(6,4)$, $(7,5)$. Thus two inversions each are induced by the nodes
$1$ and $2$, whereas one inversion each is induced by the nodes $4$ and $5$.\label{fig:Example}}
\end{figure}

We remark that the asymptotic results obtained for inversions in trees are completely different from the corresponding ones for permutations of a set $\{1, 2, \dots, n\}$.
It is well-known (see, e.g., \cite{Louchard-Prodinger_InversionsPermutations}) that the total number of inversions in a random permutation of size $n$ is asymptotically normal distributed with expectation and variance of order $n^{2}$ and $n^{3}$, respectively. Trivially, the number of inversions of the kind $(i,j)$, with $i > j$ an element to the left of $j$, in a random permutation of size $n$ is uniformly distributed on $\{0, 1, \dots, n-j\}$.

\smallskip

The plan of this paper is as follows. In Section~\ref{sec2} we collect definitions and known results about simply generated tree families,
whereas in Section~\ref{sec3} we state the main results of this paper concerning the random variables $I_{n}$ and $I_{n,j}$.
A proof of the results for $I_{n}$ and $I_{n,j}$ is given in Section~\ref{sec4} and Section~\ref{sec5}, respectively.

\smallskip

Before continuing we define some notation used throughout this paper.
The operator $[z^n]$ extracts the coefficient of $z^n$ from a power series 
$A(z) = \sum_{n \geq 0} a_n z^n$, i.e., $[z^n] A(z) = a_n$. For $s \in \NN_0$, $x^{\ff{s}}$ 
(resp.\ $x^{\rf{s}}$) denotes the $s$-th falling (resp.\ rising) factorial of $x$, i.e., 
$x^{\ff{0}} = x^{\rf{0}} = 1$, and $x^{\ff{s}} = x(x-1) \cdots (x-s+1)$, 
$x^{\rf{s}} = x(x+1) \cdots (x+s-1)$, for $s \geq 1$.
Furthermore, for each variable $x$ we denote the differential operator with respect to $x$
by $\D_x$, and we define two operators $\V$ ($= \V_{q}$) and $\Z$, which act on bivariate power series $G(z,q)$ by $\V G(z,q) := G(z,1)$, and $\Z G(z,q) := z G(z,q)$ (analogous definitions
for multivariate power series).
Moreover, if $X$ and $X_n$, $n \geq 1$, are random variables, then $X_n \convd X$ denotes
the weak convergence (i.e., the convergence in distribution) of the sequence $(X_n)_{n \geq 1}$
to $X$. 

\section{Labelled families of simply generated trees and auxiliary results\label{sec2}}

Families of simply generated trees were introduced by Meir and Moon in \cite{Meir-Moon}. 
As mentioned before, many important combinatorial tree families such as, e.g., labelled unordered trees (also called Cayley trees), 
binary trees, labelled cyclic trees (also called mobile trees) and ordered trees (also called planted plane trees), can be considered as special instances of simply generated trees.

We now recall how simply generated tree families are defined in the labelled context, and then 
collect some well-known auxiliary results. Note that in the following the term ``tree'' will always denote a labelled tree.

\subsection{Definitions\label{ssec21}}
A class $\T$ of (labelled) simply generated trees is defined in the following way:
One chooses a sequence $(\varphi_{\ell})_{\ell \geq 0}$ (the so-called \emph{degree-weight sequence}) of nonnegative real numbers with $\varphi_0 > 0$. 
Using this sequence, the \textit{weight $w(T)$} of each ordered tree (i.e., each rooted tree, 
in which the children of each node are ordered from left to right) is defined by 
$w(T) := \prod_{v \in T} \varphi_{d(v)}$, where by $v \in T$ we mean that $v$ is a vertex of $T$ and 
$d(v)$ denotes the number of children of $v$ (i.e., the out-degree of $v$). The family $\T$ associated to the degree-weight sequence $(\varphi_{\ell})_{\ell \geq 0}$ 
then consists of all trees $T$ (or all trees $T$ with $w(T) \neq 0$) together with their weights.

We let $T_n := \sum_{|T| = n} w(T)$ denote the the \emph{total weight} of all trees of size $n$ in $\T$,
and define by $T(z) := T_n \frac{z^n}{n!}$ its exponential generating function. Then it follows that $T(z)$ satisfies the (formal)
functional equation
\begin{equation}\label{eqn:T}
 T(z) = z \varphi(T(z)),
\end{equation}
where the degree-weight generating function $\varphi(t)$ is defined via $\varphi(t) := \sum_{\ell \geq 0} \varphi_{\ell} t^{\ell}$. 

We want to remark that each simply generated tree family $\T$ can also be defined by a formal 
equation of the form
\begin{equation}\label{eqn:T-comb}
 \T = \bigcirc * \varphi(\T),
\end{equation}
where $\bigcirc$ denotes a node, $*$ is the combinatorial product of labelled objects, and 
$\varphi(\T)$ is a certain substituted structure (see, e.g., \cite{Flajolet-Sedgewick_AnalyticCombinatorics}). 
Hence, the functional equation \eqref{eqn:T} can be obtained directly from the combinatorial construction of $\T$ 
using the symbolic method (cf. \cite{Flajolet-Sedgewick_AnalyticCombinatorics}). Furthermore, $(T_n)_{n \geq 1}$ is for many important simply 
generated tree families a sequence of natural numbers, and then the total weight $T_n$ can be interpreted as the 
number of trees of size $n$ in $\T$. We now give several examples where this is the case.

\subsection*{Examples:}
\begin{itemize}
 \item Binary trees can be defined combinatorially as follows:
    \[
     \T = \bigcirc * (\lbrace\square\rbrace \:\dcup\: \T) * (\lbrace\square\rbrace \:\dcup\: \T).
    \]
    Here, $\square$ denotes an empty subtree and $\dcup$ is the disjoint union. This formal 
    equation expresses that each binary tree consists of a root node and a left and a right subtree, 
    each of which is either a binary tree or empty. The formal equation for $\T$ can directly 
    be translated into a functional equation for $T(z)$, namely
    \[
     T(z) = z (1+T(z))^2.
    \]
    Hence, binary trees are the simply generated tree family defined by $\varphi(t) = (1+t)^2$, i.e., by the degree-weight sequence $\varphi_{\ell} = \binom{2}{\ell}$, $\ell \ge 0$.
 \item Ordered trees are rooted trees, in which the children of each node are ordered. Thus,
   combinatorially speaking, each ordered tree consists of a root node and a sequence of ordered trees, 
   \[
    \T = \bigcirc * \text{\textsc{Seq}}(\T) = \bigcirc * \big(\lbrace\square\rbrace \:\dcup\: \T \:\dcup\: 
         \T^2 \:\dcup\: \T^3 \:\dcup\: \ldots\big).
   \]
   From this one gets the functional equation
   \[
    T(z) = \frac{z}{1 - T(z)},
   \]
   i.e., $\varphi(t) = \frac{1}{1-t}$. Of course, this corresponds to the degree-weight sequence $\varphi_{\ell} = 1$, $\ell \ge 0$.
 \item Unordered trees are rooted trees in which there is no order on the children of any node. 
   Hence, each unordered tree consists of a root node and a set of unordered trees, which can be 
   written formally as
   \[
    \T = \bigcirc * \text{\textsc{Set}}(\T) = \bigcirc * \Big(\lbrace\square\rbrace \:\dcup\: \T \:\dcup\: 
         \frac{\T^2}{2!} \:\dcup\: \frac{\T^3}{3!} \:\dcup\: \ldots\Big).
   \]
   This leads to the functional equation
   \[
    T(z) = z \exp(T(z)),
   \]
   i.e., one has $\varphi(t) = \exp(t)$, or equivalently $\varphi_{\ell} = 1/{\ell}!$, $\ell \ge 0$.
 \item Cyclic trees may be considered as equivalence classes of ordered trees, where cyclic rearrangements of the subtrees of nodes lead to a tree of the same class.
   Hence, each cyclic tree is either a single root node or it consists of a root node and a (non-empty) cycle of unordered trees, which can be 
   written formally as
   \[
    \T = \bigcirc \:\dcup\: \bigcirc * \:\text{\textsc{Cyc}}(\T) = \bigcirc * \big(\lbrace\square\rbrace \:\dcup\: \text{\textsc{Cyc}}(\T)\big).
   \]
   This leads to the functional equation
   \[
    T(z) = z \left(1+\log\left(\frac{1}{1-T(z)}\right)\right),
   \]
   i.e., one has $\varphi(t) = 1+\log\big(\frac{1}{1-t}\big)$, or equivalently $\varphi_{0}=1$ and $\varphi_{\ell} = 1/{\ell}$, $\ell \ge 1$.
\end{itemize}
We remark that plane trees as considered in \cite{Gessel-Sagan-Yeh_TreeInversions} are not covered by the definition of simply generated trees. However, since every subtree of the root node of a plane tree is an ordered tree,
the methods applied in this work for a study of the number of inversions can be adapted easily to treat also this tree family, which leads to the same limiting distribution results as for ordered trees. Thus we omit computations for this tree family.

\subsection{Auxiliary results\label{ssec22}}
We now collect some known results (see, e.g., \cite{Flajolet-Sedgewick_AnalyticCombinatorics,Panholzer_AncestorTree}) on the function $T(z)$ satisfying 
\eqref{eqn:T}.
First note that in general $T(z)$ and $\varphi(t)$ must be regarded as formal power 
series, because they do not need to have a positive radius of convergence, and then \eqref{eqn:T} 
must be understood as a formal equation. Thus, in order to analyze properties of simply generated 
tree families by analytic methods, we will need to make certain assumptions on $\varphi$. In particular, 
we will assume that $\varphi(t)$ has a positive radius of convergence $R$, and that there exists a 
minimal positive solution $\tau < R$ of the equation 
\begin{equation}\label{eqn:tphi'}
 t \varphi'(t) = \varphi(t).
\end{equation}
If we define 
\begin{equation}\label{eqn:d}
 d := \gcd{\ell: \varphi_{\ell} > 0},
\end{equation}
it then follows that \eqref{eqn:tphi'} has exactly $d$ 
solutions of smallest modulus, which are given by $\tau_j = \omega^j \tau$, for $0 \leq j \leq d-1$, 
where $\omega = \exp (\frac{2\pi \ii}{d})$. 
From the implicit function theorem it follows that the equation $z = \frac{t}{\varphi(t)}$ is not 
invertible in any neighbourhood of $t = \tau_j$, for $0 \leq j \leq d-1$. 
This leads to $d$ dominant singularities of $T(z)$ at $z = \rho_j$, where $\rho_j = \omega^j \rho$, 
$\rho = \frac{\tau}{\varphi(\tau)}$. 

For our purpose, it is important to note that under the above assumptions, $T(z)$ is amenable to
singularity analysis (cf. \cite{Flajolet-Odlyzko_SingularityAnalysis}), i.e., there are constants 
$\eta > 0$ and $0 < \phi < \pi/2$ such that $T(z)$ is analytic in the domain 
$\set{z \in \CC : |z| < \rho + \eta, z \neq \rho_j, |\Arg(z-\rho_j)| > \phi, 
\text{ for all } 0 \leq j \leq d-1}$. 
The local expansion of $T(z)$ around the singularity $z = \rho_j$ is given by
\begin{equation}\label{eqn:T_exp}
 T(z) = \tau_j - \omega^j \sqrt{\frac{2 \varphi(\tau)}{\varphi''(\tau)}} \sqrt{1 - \frac{z}{\rho_j}}
        + \OO{\rho_j -z}.
\end{equation}
Using singularity analysis and summing up the contributions of the $d$ dominant singularities,
one obtains
\begin{equation}\label{eqn:T_n}
 \frac{T_n}{n!} = [z^n] T(z) = \frac{d \sqrt{\varphi(\tau)}}{\sqrt{2 \pi \varphi''(\tau)} \rho^n n^{\frac{3}{2}}} 
    \oneplusOO{\frac{1}{n}}, 
\end{equation}
for $n \equiv 1 \mod d$. If $n \not\equiv 1 \mod d$, one has of course $T_n = 0$, because in this
case each ordered tree of size $n$ has weight zero.

In our analysis, we will further make use of the functions $\varphi^{(m)}(T(z))$ 
(where $\varphi^{(m)}(t)$ is the $m$-th derivative of $\varphi(t)$). 
Each of these functions has $d$ dominant singularities at $z = \rho_j$, $0 \leq j \leq d-1$, and 
complies with the requirements for singularity analysis. 
Around $z = \rho_j$, one has the expansion
\begin{equation}\label{eqn:phi_k}
\varphi^{(m)}(T(z)) = \varphi^{(m)}(\tau_j) - \varphi^{(m+1)}(\tau_j) \omega^j 
    \sqrt{\frac{2 \tau}{\rho \varphi''(\tau)}} \sqrt{1-\frac{z}{\rho_j}} + \OO{\rho_j - z},
\end{equation}
and we will especially make use of the expansion
\begin{equation}\label{eqn:zphi_exp}
z \varphi'(T(z)) = 1 - \sqrt{2 \rho \tau \varphi''(\tau)} \sqrt{1-\frac{z}{\rho_j}} + \OO{\rho_j - z}.
\end{equation}

\section{Parameters studied and results\label{sec3}}

\subsection{Parameters studied}

Consider a simply generated tree family $\T$ associated to a degree-weight sequence $(\varphi_{\ell})_{\ell \ge 0}$.
In our analysis of parameters in trees of $\T$ we will always use the ``random tree model for weighted trees'', i.e.,
when speaking about a random tree of size $n$ we assume that each tree $T$ in $\T$ of size $n$ is chosen with a probability proportional to its weight $w(T)$.

The main quantities of interest are the random variable $I_{n}$, which counts the total number of inversions of a random simply generated tree of size $n$,
and the random variable $I_{n,j}$, which counts the number of inversions of the kind $(i,j)$, with $i > j$ an ancestor of $j$, in a random simply generated tree of size $n$.

We mention the relation to a suitably adapted tree inversion polynomial for weighted tree families:
\begin{equation*}
  J_{n}(q) := \sum_{T \in \T: |T|=n} w(T) \cdot q^{\inv(T)} = T_{n} \sum_{k \ge 0} \mathbb{P}\{I_{n}=k\} q^{k}.
\end{equation*}

\subsection{Auxiliary results for probability distributions\label{ssec32}}

We collect some basic facts about two important probability distributions appearing later in our analysis.

\begin{definition}\label{def:airy}
The Airy distribution is the distribution of a random variable $I$ with $r$-th moments
 \[
\mu_r := \EE{I^r} = \frac{2 \sqrt{\pi}}{\Gamma(\frac{3r-1}{2})} C_r,
\]
where the constants $C_r$ can inductively be defined by 
\begin{equation}\label{eqn:C_r}
2 C_r = (3r-4)rC_{r-1} + \sum_{j=1}^{r-1} \binom{r}{j} C_j C_{r-j}, \quad r \geq 2, \quad C_1 = \frac{1}{2}. 
\end{equation}
\end{definition}
Since we will use the method of moments in order to establish our results, the following well-known result about the Airy distribution is important
(see, e.g., \cite{Flajolet-Louchard_AiryDistribution}, where one can find more details
about the Airy distribution and some equivalent definitions).
\begin{lemma}\label{lem:airy}
The Airy distribution is uniquely determined by its sequence of moments $(\mu_r)_{r \geq 1}$.
\end{lemma}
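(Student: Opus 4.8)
The plan is to show that the Airy distribution, defined via its moment sequence $(\mu_r)_{r \geq 1}$ in Definition~\ref{def:airy}, is the unique probability distribution with those moments. The standard tool for such moment-determinacy is \emph{Carleman's condition}: a distribution is uniquely determined by its moments provided $\sum_{r \geq 1} \mu_{2r}^{-1/(2r)} = \infty$. So the whole task reduces to obtaining good enough upper bounds on the growth of $\mu_r$ as $r \to \infty$ to verify that this series diverges.

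First I would extract the asymptotic growth rate of the constants $C_r$ from the quadratic recurrence \eqref{eqn:C_r}. The recurrence $2 C_r = (3r-4) r C_{r-1} + \sum_{j=1}^{r-1} \binom{r}{j} C_j C_{r-j}$ has a leading linear term of order $r^2 C_{r-1}$ and a convolution term. The natural step is to posit an Ansatz of the form $C_r \sim \kappa \cdot a^r \, \Gamma(\alpha r + \beta)$ for suitable constants, or more simply to prove by induction a bound of the shape $C_r \leq K^r (r!)^{3/2}$ for some constant $K$; one checks that such a super-exponential-but-controlled bound is consistent with both the dominant $r^2 C_{r-1}$ term (which contributes a factor $\asymp r^2$, matching the jump from $(r-1)!^{3/2}$ to $r!^{3/2}$) and the convolution $\sum_j \binom{r}{j} C_j C_{r-j}$ (which, with the factorial growth absorbing the binomial coefficients, stays of the same order). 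Translating through $\mu_r = \frac{2\sqrt{\pi}}{\Gamma(\frac{3r-1}{2})} C_r$ and using the reflection/Stirling asymptotics of the Gamma function, this yields $\mu_r$ growing roughly like a power of $r!$, precisely $\mu_r = \OO{(Lr)^{3r/2}}$ for some constant $L$, equivalently $\mu_{2r}^{1/(2r)} = \OO{r^{3/2}}$.

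Given that bound, verifying Carleman's condition is immediate: $\sum_r \mu_{2r}^{-1/(2r)} \gtrsim \sum_r r^{-3/2}$, which \emph{converges}, so in fact Carleman's condition in the naive form must be checked more carefully — the honest statement is that the Airy moments grow like $\mu_r^{1/r} \asymp r^{3/2}$, and the correct normalization gives $\mu_{2r}^{1/(2r)} \asymp r^{3/2}$, whence $\sum \mu_{2r}^{-1/(2r)} \asymp \sum r^{-3/2} < \infty$. This signals that one should instead appeal directly to the literature: the cleanest route, and the one I expect the authors to take, is simply to \emph{cite} the known determinacy result. Indeed the reference \cite{Flajolet-Louchard_AiryDistribution} establishes exactly this, and the honest proof of Lemma~\ref{lem:airy} is a one-line invocation of that source together with a remark that the moment growth $\mu_r = \OO{c^r r!^{3/2}}$ is subfactorial-in-the-right-sense to force uniqueness by a Carleman-type argument.

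The main obstacle, then, is the asymptotic analysis of the nonlinear recurrence \eqref{eqn:C_r} itself: getting a sharp enough handle on $C_r$ to pin down the exact order $\mu_r^{1/r} \asymp r^{3/2}$ and to confirm the relevant summability/divergence criterion with the correct sign. In a self-contained proof one would have to balance the linear and the convolution contributions in the recurrence and rule out that the convolution dominates and produces faster-than-expected growth; singularity analysis of the generating function $\sum_r C_r z^r$ (which is tied to the Airy function / the fundamental solution of a Riccati-type equation) is the systematic way to do this, but it is exactly the content already worked out in \cite{Flajolet-Louchard_AiryDistribution}. Consequently I expect the proof given here to be short, deferring the technical moment-growth estimate to that reference and merely stating that the resulting bound suffices to apply the method of moments in the subsequent sections.
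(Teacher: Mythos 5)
The paper gives no proof of Lemma~\ref{lem:airy} at all: it is stated as a well-known fact with a pointer to \cite{Flajolet-Louchard_AiryDistribution}, so your final fallback (``cite the reference'') coincides with what the authors actually do. However, your attempted self-contained argument contains a concrete error that leads you to the false conclusion that Carleman's condition fails, and this matters because the direct Carleman argument in fact works and you abandon it for the wrong reason.

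The error is in the growth rate you assign to $C_r$. The dominant term of the recurrence \eqref{eqn:C_r} gives $C_r/C_{r-1}\approx\frac{(3r-4)r}{2}\asymp r^2$, so the consistent Ansatz is $C_r\asymp K^r\,(r!)^{2}$ up to subexponential corrections, not $K^r (r!)^{3/2}$: your parenthetical claim that a factor $\asymp r^2$ per step ``matches the jump from $(r-1)!^{3/2}$ to $r!^{3/2}$'' is off, since that jump is only a factor $r^{3/2}$. (One checks, e.g.\ by induction, that the convolution term is of the same or smaller order, so $\log C_r = 2r\log r+\OO{r}$.) Dividing by $\Gamma\big(\frac{3r-1}{2}\big)$, whose logarithm is $\frac{3r}{2}\log r+\OO{r}$, gives $\log\mu_r=\frac{r}{2}\log r+\OO{r}$, i.e.\ $\mu_r^{1/r}\asymp\sqrt{r}$ --- Gaussian-type moment growth, consistent with the known tail $\PP{I>x}=\OO{e^{-cx^{2}}}$ of the Airy distribution --- and not $\mu_r^{1/r}\asymp r^{3/2}$ as you assert. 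Consequently $\sum_{r}\mu_{2r}^{-1/(2r)}\gtrsim\sum_{r}r^{-1/2}=\infty$, Carleman's condition \emph{is} satisfied, and your intermediate statement that this series converges is false. With the exponent corrected, your Carleman route closes the proof directly; alternatively, the one-line citation of \cite{Flajolet-Louchard_AiryDistribution}, which is all the paper provides, suffices.
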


\begin{definition}\label{def:rayleigh}
The Rayleigh distribution with parameter $\sigma > 0$ is the distribution of a random variable $X_{\sigma}$ with probability density function
\begin{equation}
   f_{\sigma}(x) = \frac{x}{\sigma^{2}} \, e^{-\frac{x^{2}}{2\sigma^{2}}}, \quad x > 0.
\end{equation}
\end{definition}

The following basic fact about the Rayleigh distribution will be required in our analysis.
\begin{lemma}\label{lem:rayleigh}
The Rayleigh distribution is uniquely determined by its sequence of $r$-th moments $(\mu_r)_{r \geq 1}$, which are given as follows:
\begin{equation}
  \mu_r := \EE{X_{\sigma}^r} = \sigma^{r}\, 2^{\frac{r}{2}} \, \Gamma\big(1+\frac{r}{2}\big).
\end{equation}
\end{lemma}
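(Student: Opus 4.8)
The plan is to establish the lemma in two essentially independent parts: first deriving the closed form for the moments by a direct integration, and then verifying that these moments determine the distribution uniquely by invoking a standard moment-determinacy criterion.

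For the moments, I would start from the definition together with the density given in Definition~\ref{def:rayleigh}, writing
\[
\mu_r = \EE{X_{\sigma}^r} = \int_0^\infty x^r \, \frac{x}{\sigma^2} \, e^{-x^2/(2\sigma^2)} \, dx = \frac{1}{\sigma^2} \int_0^\infty x^{r+1} e^{-x^2/(2\sigma^2)} \, dx.
\]
The natural substitution is $u = x^2/(2\sigma^2)$, i.e.\ $x = \sigma\sqrt{2u}$, which transforms the integral into a Gamma integral of the form $\int_0^\infty u^{r/2} e^{-u} \, du = \Gamma(1+\frac{r}{2})$. A routine bookkeeping of the resulting powers of $\sigma$ and of $2$ then yields $\mu_r = \sigma^r \, 2^{r/2} \, \Gamma(1+\frac{r}{2})$, as claimed. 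This part is entirely elementary.

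For the uniqueness (i.e.\ determinacy of the associated Stieltjes moment problem), I would exploit the super-exponential tail decay of the density: since the factor $e^{-x^2/(2\sigma^2)}$ dominates $e^{tx}$ for every fixed $t$, the moment generating function $\EE{e^{t X_{\sigma}}}$ is finite for all $t \in \RR$ (in fact it extends to an entire function). By the classical theorem that a probability distribution whose moment generating function is finite in a neighbourhood of the origin is uniquely determined by its sequence of moments, the claim follows. As an alternative I would record the direct verification of Carleman's condition: from the even moments $\mu_{2r} = \sigma^{2r} \, 2^r \, r!$ and Stirling's formula one obtains $\mu_{2r}^{1/(2r)} = \Theta(\sqrt{r})$, whence $\sum_{r \ge 1} \mu_{2r}^{-1/(2r)}$ diverges, which again gives determinacy.

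I expect the moment computation to be routine; the only point requiring genuine care is the uniqueness assertion, where the task is to invoke an appropriate determinacy theorem rather than to reprove it. Because the Rayleigh density decays faster than any exponential, both the moment-generating-function argument and Carleman's condition apply without difficulty, so I regard this as the mildest of obstacles rather than a serious one.
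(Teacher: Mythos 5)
Your proof is correct. The paper states this lemma as a known basic fact and gives no proof of its own, so there is nothing to compare against; your two steps --- the substitution $u = x^2/(2\sigma^2)$ reducing the moment integral to $\Gamma(1+\frac{r}{2})$, and moment-determinacy via finiteness of the moment generating function (or Carleman's condition) --- are exactly the standard justification, and in fact mirror the argument the paper does spell out for the analogous random variable $Y_\gamma$ in Section~\ref{sec5}.
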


\subsection{Results}

Let $\T$ be the labelled family of simply generated trees associated to a degree-weight sequence
$(\varphi_{\ell})_{\ell \geq 0}$, where the function $\varphi(t) := \sum_{\ell \geq 0} \varphi_{\ell} t^{\ell}$ has 
positive radius of convergence $R$, and equation \eqref{eqn:tphi'} has a minimal positive solution 
$\tau < R$. Furthermore, let $\rho = \frac{\tau}{\varphi(\tau)}$ (recall the definitions of Subsection~\ref{ssec22}).
Then the following holds:

\begin{theorem}[Global behaviour]\label{thm:global}
 The random variable $I_n$, which counts the total number of inversions in a random tree of size 
 $n$ of $\T$ is, after proper normalization, asymptotically Airy distributed:\\
 It holds that $\EE{I_n} \sim c_{\varphi} \sqrt{\pi} n^{3/2}$, where 
 $c_{\varphi} = \frac{1}{\sqrt{8 \rho \tau \varphi''(\tau)}}$, and
  \[
   \frac{I_n}{c_{\varphi} n^{3/2}} \convd I,
  \]
  where $I$ is an Airy distributed random variable.
\end{theorem}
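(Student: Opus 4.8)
The plan is to establish the limiting distribution by the method of moments, whose validity here is guaranteed by Lemma~\ref{lem:airy}: since the Airy distribution is uniquely determined by its moments $(\mu_r)_{r \geq 1}$, it suffices to show that the $r$-th moments of the normalized variable $I_n / (c_\varphi n^{3/2})$ converge to $\mu_r$ for every fixed $r \geq 1$. Concretely, I would show that
\[
  \EE{I_n^r} \sim (c_\varphi)^r\, \mu_r\, n^{3r/2}, \qquad n \to \infty,
\]
and then invoke the moment-convergence theorem together with Lemma~\ref{lem:airy}. The expectation asymptotics $\EE{I_n} \sim c_\varphi \sqrt{\pi}\, n^{3/2}$ in the statement is just the case $r=1$ (recalling $\mu_1 = 2\sqrt{\pi}\, C_1 / \Gamma(1) = \sqrt{\pi}$).

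The first substantive step is to encode the moments generatingfunctionologically. The natural device is the bivariate generating function $J(z,q) := \sum_{n} J_n(q) \frac{z^n}{n!}$, where the extra variable $q$ marks inversions. Following the combinatorial decomposition $\T = \bigcirc * \varphi(\T)$, I would derive a functional equation for $J(z,q)$ that tracks how inversions are created when a root is attached: a new root of label smaller than all descendants contributes inversions with every larger-labelled node below it, so the recursive structure should yield a difference-differential or $q$-difference equation. To extract the $r$-th factorial moment I would apply the operator $(\DQ)^r$ and set $q=1$ (using the operators $\V$ and $\Z$ introduced in the preliminaries), obtaining a sequence of generating functions $M_r(z) := \V (\DQ)^r J(z,q)$, each expressible through $T(z)$ and the auxiliary functions $z\varphi'(T(z))$, $\varphi^{(m)}(T(z))$. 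The key structural fact I expect is that the $M_r(z)$ satisfy a recursion mirroring \eqref{eqn:C_r}: each is a rational expression in $T(z)$ and $z\varphi'(T(z))$ whose dominant singular behaviour is governed by increasing powers of the singular factor $\sqrt{1 - z/\rho_j}$.

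The analytic heart of the argument is then singularity analysis. Using the local expansions \eqref{eqn:T_exp} and especially \eqref{eqn:zphi_exp}, the denominators $1 - z\varphi'(T(z))$ behave like $\sqrt{2\rho\tau\varphi''(\tau)}\,\sqrt{1 - z/\rho_j}$ near each dominant singularity $\rho_j$, so $M_r(z)$ develops a singularity of type $(1 - z/\rho_j)^{-(3r-1)/2}$. Applying singularity analysis (cf.\ \cite{Flajolet-Odlyzko_SingularityAnalysis}) and summing the contributions of all $d$ dominant singularities $\rho_j = \omega^j \rho$ — exactly as in the derivation of \eqref{eqn:T_n} — yields $[z^n] M_r(z)$, and dividing by $[z^n]T(z)$ from \eqref{eqn:T_n} converts the factorial moment of the weighted count into the moment $\EE{I_n^r}$ of the random variable. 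The combinatorial constant $C_r$ from Definition~\ref{def:airy} should emerge precisely as the amplitude of the singular term produced by the recursion for $M_r$, and the Gamma-factor $\Gamma(\frac{3r-1}{2})$ arises from the transfer theorem applied to the exponent $-(3r-1)/2$; the normalization $c_\varphi = 1/\sqrt{8\rho\tau\varphi''(\tau)}$ is dictated by the square-root amplitude in \eqref{eqn:zphi_exp}.

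The main obstacle I anticipate is verifying that the recursion satisfied by the singular amplitudes of $M_r(z)$ matches \eqref{eqn:C_r} exactly, including the binomial convolution term and the factor $(3r-4)r$. This requires carefully tracking how the operator $\DQ$ interacts with the functional equation: one term comes from differentiating the ``root contribution'' (giving the linear $(3r-4)rC_{r-1}$ term) and one from the product structure of subtrees (giving the convolution $\sum_j \binom{r}{j} C_j C_{r-j}$). Keeping the bookkeeping of $\OO{\cdot}$ error terms uniform across the $d$ symmetric singularities, and confirming that only the leading singular term of each $M_r$ contributes to the asymptotics while lower-order terms are absorbed into the $\oneplusOO{1/n}$ relative error, is where the calculation demands the most care.
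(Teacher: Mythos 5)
Your proposal is correct and follows essentially the same route as the paper: method of moments with Fr\'echet--Shohat justified by Lemma~\ref{lem:airy}, a bivariate generating function with $q$ marking inversions satisfying a $q$-difference-differential equation derived from $\T = \bigcirc * \varphi(\T)$, moment pumping via $\V\DQ^r$, and singularity analysis producing singular exponents $(3r-1)/2$ whose amplitudes satisfy the recursion \eqref{eqn:C_r}. The only organizational difference is that the paper first passes to the subfamily $\hT$ of trees with root labelled $1$ (whose root contributes no inversions, giving the clean equation $\DZ F(z,q)=\varphi(\H F(z,q))$) and transfers the moments back to $I_n$ at the end, whereas you work with $J(z,q)$ directly; this is a minor bookkeeping choice, not a different argument.
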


\begin{theorem}[Local behaviour]\label{thm:local}
  The random variable $I_{n,j}$, which counts the number of inversions of the kind $(i,j)$, with $i>j$ an ancestor of $j$, in a random tree of size $n$ of $\T$
  has, depending on the growth of $1 \le j=j(n) \le n$, the following asymptotic behaviour.
  \begin{itemize}
    \item Region $n - j \gg \sqrt{n}$: $I_{n,j}$ is, after proper normalization, asymptotically Rayleigh distributed:
    \begin{equation*}
      \frac{\sqrt{n}}{n-j}I_{n,j} \convd X_{\sigma},
    \end{equation*}
    where $X_{\sigma}$ is a Rayleigh distributed random variable with parameter $\sigma:=\frac{1}{\sqrt{\rho \tau \varphi''(\tau)}}$.
    \item Region $n-j \sim \alpha \sqrt{n}$, with $\alpha \in \mathbb{R}^{+}$:
    $I_{n,j}$ converges in distribution to a discrete random variable $Y_{\gamma}$, with 
    \begin{equation*}
      \mathbb{P}\{Y_{\gamma}=k\} = \frac{\gamma^{k}}{k!} \int_{0}^{\infty} x^{k+1} e^{-\frac{x^{2}}{2} - \gamma x} dx, \quad k \ge 0,
    \end{equation*}
    and $\gamma:=\frac{\alpha}{\sqrt{\rho \tau \varphi''(\tau)}}$.
    \item Region $n-j \ll \sqrt{n}$: $I_{n,j}$ converges in distribution to a random variable with all its mass concentrated at $0$, i.e., $I_{n,j} \convd 0$.
  \end{itemize}
\end{theorem}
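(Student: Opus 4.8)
The plan is to reduce the whole statement to two ingredients: (i) a simple conditional structure of $I_{n,j}$ given the depth of the node carrying label $j$, and (ii) the asymptotics of the factorial moments of that depth, extracted by singularity analysis. The method of moments, together with Lemma~\ref{lem:rayleigh} and an analogous determinacy fact for the limit law $Y_\gamma$, then yields all three regimes from a single moment identity.

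First I would exploit that the weight $w(T)=\prod_{v}\varphi_{d(v)}$ depends only on the shape of $T$ and not on its labelling. Hence, conditionally on the shape, the labelling is uniform, the vertex carrying label $j$ is uniformly distributed among all $n$ vertices, and, conditionally on this vertex having depth $h$ (i.e.\ on it having $h$ ancestors), the $h$ ancestor labels form a uniformly random $h$-subset of $\set{1,\dots,n}\setminus\set{j}$. Consequently $I_{n,j}$, conditioned on the depth being $h$, is hypergeometrically distributed (drawing $h$ times from a population of $n-1$ containing $n-j$ ``large'' labels), while the unconditional depth of the node labelled $j$ has the law of the depth $D_n$ of a uniformly chosen vertex in a random tree of size $n$, the same for every $j$. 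Using the well-known falling-factorial moments of the hypergeometric law, this gives the exact identity $\EE{I_{n,j}^{\ff r}} = \frac{(n-j)^{\ff r}}{(n-1)^{\ff r}}\,\EE{D_n^{\ff r}}$ for every $r\ge 1$.

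The core analytic step is to determine $\EE{D_n^{\ff r}}$. I would introduce the depth generating function $S(z,u) := \sum_{T}w(T)\frac{z^{|T|}}{|T|!}\sum_{v\in T}u^{\mathrm{depth}(v)}$, obtain from a spine (pointing) decomposition the closed form $S(z,u) = \frac{T(z)}{1 - u\,z\varphi'(T(z))}$, and hence $\D_u^r S(z,u)\big|_{u=1} = \frac{r!\,T(z)\,(z\varphi'(T(z)))^r}{(1 - z\varphi'(T(z)))^{r+1}}$. Plugging in the expansion \eqref{eqn:zphi_exp}, i.e.\ $1 - z\varphi'(T(z)) \sim \sqrt{2\rho\tau\varphi''(\tau)}\,\sqrt{1 - z/\rho_\ell}$ at each of the $d$ dominant singularities, and performing singularity analysis while summing over all $\rho_\ell$ exactly as in the derivation of \eqref{eqn:T_n}, the periodicity factors cancel between $\D_u^r S|_{u=1}$ and $zT'(z)$, and one arrives at $\EE{D_n^{\ff r}} \sim \sigma^r\,2^{r/2}\,\Gamma\big(1+\tfrac r2\big)\,n^{r/2}$ with $\sigma = \frac{1}{\sqrt{\rho\tau\varphi''(\tau)}}$; these are precisely the moments of a Rayleigh law of parameter $\sigma$, reflecting the asymptotic Rayleigh behaviour of $D_n/\sqrt n$.

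Finally I would combine the two ingredients regime by regime. In the region $n-j\gg\sqrt n$ the variable $I_{n,j}$ tends to infinity, so factorial and ordinary moments agree to leading order and $\EE{\big(\tfrac{\sqrt n}{n-j}I_{n,j}\big)^r} \to \sigma^r 2^{r/2}\Gamma(1+\tfrac r2)$; by Lemma~\ref{lem:rayleigh} and the method of moments this gives $\frac{\sqrt n}{n-j}I_{n,j}\convd X_\sigma$. In the region $n-j\sim\alpha\sqrt n$ one has $(n-j)^{\ff r}/(n-1)^{\ff r}\sim\alpha^r n^{-r/2}$, so $\EE{I_{n,j}^{\ff r}}\to\gamma^r 2^{r/2}\Gamma(1+\tfrac r2)$ with $\gamma=\alpha\sigma$; expanding $e^{\gamma x}$ in the defining integral of $Y_\gamma$ and substituting $t=x^2/2$ shows these are exactly the falling-factorial moments of $Y_\gamma$, and since this sequence grows only like $\Gamma(1+\tfrac r2)$ it satisfies Carleman's condition, whence $Y_\gamma$ is moment-determinate and $I_{n,j}\convd Y_\gamma$. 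In the region $n-j\ll\sqrt n$ already $\EE{I_{n,j}}\sim\sigma 2^{1/2}\Gamma(\tfrac32)\,\tfrac{n-j}{\sqrt n}\to 0$, forcing $I_{n,j}\convd 0$. The main obstacle I anticipate is the analytic heart of the third step: carrying out the singularity analysis of $\D_u^r S|_{u=1}$ uniformly in $r$, correctly handling the $d>1$ periodicity by summing the $d$ conjugate singularities, and checking that the resulting constant collapses to the clean parameter $\sigma=1/\sqrt{\rho\tau\varphi''(\tau)}$ (using $\rho=\tau/\varphi(\tau)$); the conceptual crux is spotting the hypergeometric reduction, which decouples the dependence on $j$ from the purely structural depth statistic.
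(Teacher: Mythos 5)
Your proposal is correct, and it reaches the paper's key uniform estimate \eqref{eqn:Inj_as} by a genuinely different route. The paper introduces the trivariate generating function $N(z,u,q)$ of \eqref{def:localgf}, derives the closed form $N(z,u,q)=\varphi(T(z+u))/\bigl(1-(z+uq)\varphi'(T(z+u))\bigr)$ from a tricoloured-tree decomposition, and extracts $\EE{I_{n,j}^{\ff{r}}}$ by coefficient manipulation; the endgame (singularity analysis via \eqref{eqn:T_exp} and \eqref{eqn:zphi_exp}, the duplication formula, and the method of moments in the three regimes including the moment-determinacy of $Y_\gamma$) then coincides with yours. Your probabilistic decoupling is sound: the weight depends only on the shape, so the labelling is uniform given the shape, $I_{n,j}$ given the depth $h$ of the node labelled $j$ is hypergeometric, and your exact identity $\EE{I_{n,j}^{\ff{r}}}=\frac{(n-j)^{\ff{r}}}{(n-1)^{\ff{r}}}\EE{D_n^{\ff{r}}}$ is in fact algebraically identical to the paper's \eqref{eqn:Injr}, since $\D_u^r S(z,u)\big|_{u=1}=r!\,T(z)(z\varphi'(T(z)))^r(1-z\varphi'(T(z)))^{-(r+1)}$ is exactly the function whose coefficients appear there and $[z^n]zT'(z)=nT_n/n!$. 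What your route buys is conceptual clarity: the dependence on $j$ is isolated in an elementary hypergeometric factor, the analytic work reduces to the classical Rayleigh behaviour of the depth of a random node, and the middle regime becomes transparent ($Y_\gamma$ is a Poisson mixture over a Rayleigh-distributed rate). What the paper's route buys is the explicit closed form for $N(z,u,q)$, from which the exact probability formulas \eqref{eqn:InjOrd} and \eqref{eqn:InjUnord} are read off. One small point to tighten: for the determinacy of $Y_\gamma$, invoking Carleman's condition on crude bounds for the ordinary moments is not immediate (Bell-number bounds on $\sum_\ell\Stir{r}{\ell}$ are too weak); it is cleaner to argue, as the paper does, that the factorial-moment growth $\gamma^r2^{r/2}\Gamma(1+\tfrac r2)$ makes $\EE{(1+t)^{Y_\gamma}}$ entire, so the moment generating function exists in a neighbourhood of $0$ and determinacy follows.
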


\subsection{Examples:\label{ssec34}}
Before we prove these results, we apply them to our example tree families:
\begin{itemize}
 \item Binary trees: From the equation $2t(t+1) = t \varphi'(t) = \varphi(t) = (t+1)^2$ we get
    the positive solution $\tau = 1$, and hence $c_{\varphi} = \frac{1}{2}$ and $\sigma = \sqrt{2}$. 
    Thus, if we let $I_n$ denote the number of inversions in a random binary tree of size $n$, 
    then $\frac{2 I_n}{n^{3/2}}$ converges in distribution to an Airy distributed random variable.
    Furthermore, for the number $I_{n,j}$ of inversions in a random binary tree of size $n$ induced by node $j$, it holds that
    $\frac{\sqrt{n}}{n-j}I_{n,j}$ converges, for $n-j \gg \sqrt{n}$, in distribution to a Rayleigh distributed random variable with parameter $\sqrt{2}$.
 \item Ordered trees: The equation $\frac{t}{(1-t)^2} = \frac{1}{1-t}$ yields $\tau = \frac{1}{2}$,
    and further $c_{\varphi} = \frac{1}{4}$ and $\sigma = \frac{1}{\sqrt{2}}$. Hence, for the number $I_n$ of inversions in a 
    random ordered tree of size $n$, it holds that $\frac{4 I_n}{n^{3/2}}$ is asymptotically Airy 
    distributed. Furthermore, the normalized number of inversions $\frac{\sqrt{n}}{n-j}I_{n,j}$ induced by node $j$, is, for $n-j \ll \sqrt{n}$,
    asymptotically Rayleigh distributed with parameter $1/\sqrt{2}$.
    
    We further note that for ordered trees the exact distribution of $I_{n,j}$ is given as follows (for $1 \le j \le n$ and $0 \le k \le n-j$):
    \begin{equation}\label{eqn:InjOrd}
       \mathbb{P}\{I_{n,j} = k\} = \frac{1}{\binom{n-1}{j-1}\binom{2(n-1)}{n-1}} \sum_{\ell=n-j-k}^{n-k-1} \binom{\ell}{n-j-k} \binom{2n-2}{\ell} \binom{n-\ell-1}{k} \frac{2n-1-2\ell}{2n-1-\ell}.
    \end{equation}
 \item Unordered trees: Here, one has $\tau = 1$ and thus $c_{\varphi} = \frac{1}{\sqrt{8}}$ and $\sigma = 1$.
    This shows that $\frac{\sqrt{8} I_n}{n^{3/2}}$ converges in distribution to an Airy 
    distributed random variable and that $\frac{\sqrt{n}}{n-j}I_{n,j}$ converges, for $n-j \ll \sqrt{n}$, in distribution to a Rayleigh distributed random variable with parameter $1$.
    
    Also for unordered trees the exact distribution of $I_{n,j}$ can be stated explicitly. It holds (for $1 \le j \le n$ and $0 \le k \le n-j$):
    \begin{equation}\label{eqn:InjUnord}
       \mathbb{P}\{I_{n,j} = k\} = \frac{(j-1)! (n-j)!}{n^{n-1}} \sum_{\ell=n-j-k}^{n-k-1} \binom{\ell}{n-j-k} \binom{n-\ell-1}{k} \frac{(n-\ell)n^{\ell-1}}{\ell!}.
    \end{equation}
 \item Cyclic trees: The positive real solution of the equation $\frac{t}{1-t} = 1+\log\frac{1}{1-t}$ is numerically given by $\tau \approx 0.682155$.
 One further gets $c_{\varphi} = \frac{\sqrt{1-\tau}}{\sqrt{8}} \approx 0.199325$ and $\sigma = \sqrt{1-\tau} \approx 0.563776$.
 Thus, $\frac{I_n}{c_{\varphi} n^{3/2}}$ converges in distribution to an Airy distributed random variable, and $\frac{\sqrt{n}}{n-j}I_{n,j}$ converges, for $n-j \ll \sqrt{n}$, in distribution to a Rayleigh distributed random variable with parameter $\sigma$.
\end{itemize}

\section{Proof of the results concerning the global behaviour\label{sec4}}

\subsection{Short overview of the proof}
We prove our result given in Theorem~\ref{thm:global} by using the method of moments, i.e., we show that the moments of $I_n$ converge
(after proper normalization) to the moments of the Airy distribution. Since this distribution
is uniquely determined by its moments, the convergence result then follows directly from the
theorem of Fr\'echet and Shohat \cite{Loeve_ProbabilityTheory}.
To start with, we do not study the random
variable $I_n$ directly, but consider a closely related random variable $\hI_n$. Using the tree decomposition as in \cite{Gessel-Sagan-Yeh_TreeInversions}, we then obtain a 
$q$-difference-differential equation for a suitably chosen generating function which encodes
the distribution of $\hI_n$. From this equation, we can ``pump'' the moments of $\hI_n$ 
using techniques from \cite{Flajolet-Poblete-Viola_LinearProbingHashing} and singularity analysis, and finally transfer our result to $I_n$.

\subsection{Introduction of $\hI_n$ and generating functions}
We let $\hT$ be the subset of $\T$ which consists exactly of those trees in which the 
root has label $1$. Obviously, the total weight of trees of size $n$ in $\hT$ is then given
by $\frac{T_n}{n}$. Also note that each tree in $\hT$ has the nice property that the root is not
part of any inversion. Hence, the total number of inversions can just be obtained by summing up
the contributions of the individual subtrees of the root. This fact will later be useful when we
translate a decomposition of the trees in $\hT$ to generating functions.

We let $\hI_n$ denote the number of inversions in a random tree of size $n$ in $\hT$, where each 
element of $\hT$ of size $n$ is chosen with probability proportional to its weight.

Furthermore, we introduce the generating function
\begin{equation}\label{eqn:F}
 F(z,q) := \sum_{n \geq 1} \sum_{k \geq 0} \PP{\hI_n = k} \frac{T_n}{n} q^k \frac{z^n}{n!}.
\end{equation}
Note that $n! [z^n q^k] F(z,q) = \PP{\hI_n = k} \frac{T_n}{n}$ is the total weight of all 
trees of size $n$ in $\hT$ which contain exactly $k$ inversions.
Moreover, observe that $\V F(z,q) = F(z,1)$ is just the exponential generating function of 
$\left(\frac{T_n}{n}\right)_{n \geq 1}$, and hence we have the relation 
\begin{equation}\label{eqn:F-T}
 \Z \DZ \V F(z,q) = T(z),
\end{equation}
which we will use frequently.
We further introduce the functions
\[
 f_r(z) := \V \DQ^r F(z,q),
\]
which are generating functions of the factorial moments $\EE{(\hI_n)^{\ff{r}}}$ of $\hI_n$, in
the sense that
\begin{equation}\label{eqn:f_r}
 f_r(z) = \sum_{n \geq 1} \EE{\hI_n^{\ff{r}}} \frac{T_n}{n} \frac{z^n}{n!}.
\end{equation}
Clearly, we can recover the $r$-th factorial moment of $\hI_n$ from \eqref{eqn:f_r} by
\[
 \EE{\hI_n^{\ff{r}}} = \frac{[z^n] f_r(z)}{[z^n] f_0(z)},
\]
but as we will see later, it is more convenient to use
\begin{equation}\label{eqn:fact-mom}
 \EE{\hI_n^{\ff{r}}} = \frac{[z^n] z f'_r(z)}{[z^n] zf'_0(z)} 
   = \frac{[z^n] z f'_r(z)}{[z^n] T(z)},
\end{equation}
where the second equality follows from \eqref{eqn:F-T}.

\subsection{The $q$-difference-differential equation for $F(z,q)$}
It turns out that $F(z,q)$ satisfies a certain equation involving a $q$-difference operator 
$\H$ which is very similar to the one Flajolet, Poblete and Viola used in 
\cite{Flajolet-Poblete-Viola_LinearProbingHashing} in their analysis of linear probing hashing.
In our case, we define $\H$ by
\[
 \H G(z,q) := \frac{G(z,q)-G(qz,q)}{1-q}.
\]
Using this, we get:
\begin{lemma}
 The function $F(z,q)$ defined by \eqref{eqn:F} satisfies
 \begin{equation}\label{eqn:F-HF}
  \DZ F(z,q) = \varphi(\H F(z,q)).
 \end{equation}
\end{lemma}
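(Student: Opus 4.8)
The plan is to establish the functional equation $\DZ F(z,q) = \varphi(\H F(z,q))$ directly from a combinatorial decomposition of the trees in $\hT$, translating the recursive structure of simply generated trees into an operator identity on the bivariate generating function. The key observation, already emphasized in the excerpt, is that in a tree of $\hT$ the root carries the label $1$ and is therefore an ancestor of every other node, but never the smaller member of an inversion; hence $\inv(T)$ is precisely the sum of the inversions occurring within the individual subtrees hanging off the root, with \emph{no} cross-contributions from the root itself. This is what makes the inversion statistic compatible with the symbolic decomposition.

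First I would recall the defining combinatorial equation $\T = \bigcirc * \varphi(\T)$, which says that a tree is a root node together with a $\varphi$-structure (a sequence/set/cycle, according to the weights $\varphi_\ell$) of subtrees. The crucial step is to understand how attaching $\ell$ subtrees $T_1, \dots, T_\ell$ to the root, and then relabelling all nodes with $\{1, \dots, n\}$ so that the root receives label $1$, affects the inversion count. If a node originally labelled $i$ inside some subtree, which is the $m$-th smallest among the labels placed below the root in that relabelling, then an inversion $(i,j)$ survives exactly when $i>j$ within that subtree; but the relabelling shifts all labels and, when we track the $q$-weighting $q^{\inv(T)}$, each subtree contributes a factor in which the variable $z$ marking a node gets replaced by $qz$ for every inversion that node participates in as an ancestor. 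I would make this precise by showing that the operation of inserting a subtree and accounting for the new inversions created by its labels being compared against the labels already present is encoded exactly by the difference operator $\H$: one checks that if $G(z,q)$ is the generating function for a single subtree (weighted by size and by internal inversions), then the generating function after incorporating the relabelling interactions is $\H G(z,q) = \frac{G(z,q)-G(qz,q)}{1-q}$, the $q$-analogue of integration that accounts for the label-order choices weighted by the inversions they induce.

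Concretely, I would argue that $\DZ F(z,q)$ — differentiating with respect to $z$ removes the root node and the $\frac{1}{n}$ normalization disappears because marking the root as label $1$ is canonical — equals $\varphi$ applied to the subtree generating function, where the subtree contribution is given through $\H$ rather than through plain substitution of $T(z)$, precisely because the labels of a subtree must be merged with the existing labels and each such merge is weighted by $q$ to the number of resulting inversions. The degree-weight generating function $\varphi(t) = \sum_\ell \varphi_\ell t^\ell$ assembles the contributions of the $\ell$ children with their weights $\varphi_\ell$, giving $\varphi(\H F(z,q))$. I would verify the identity first at the level of the $q$-weighted exponential generating functions for the simplest weight sequences (e.g. ordered and unordered trees) to confirm the bookkeeping, then phrase it weight-sequence-independently via the symbolic method.

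The main obstacle I expect is proving rigorously that the difference operator $\H$ is the correct translation of the relabelling interaction — i.e., that merging the label set of a subtree of size $k$ into a growing tree, and weighting by $q$ to the number of newly created inversions, corresponds exactly to the transform sending $[z^k]G$ to the appropriate $q$-binomial-weighted coefficient that $\H$ produces. This requires a careful combinatorial lemma identifying $\frac{G(z,q)-G(qz,q)}{1-q} = \sum_{k} [z^{k}]G \cdot (1 + q + \cdots + q^{k-1}) z^{k}$ type expansions with the count of order-preserving label insertions weighted by inversions, and checking that these interactions factor independently over the distinct subtrees so that $\varphi$ of the transformed argument is indeed correct. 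Everything else — the passage from $\DZ$ removing the root to the normalization, and the assembly via $\varphi$ — is routine once this insertion lemma is in place.
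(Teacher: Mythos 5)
Your overall architecture matches the paper's: decompose $\hT = \nodeOne \times \varphi(\T)$, observe that the root labelled $1$ participates in no inversion so that $\DZ F(z,q)$ equals $\varphi$ applied to the generating function of a single subtree, and then identify that subtree generating function with $\H F(z,q)$. The gap --- which you correctly flag as the main obstacle, but whose proposed resolution is off --- lies in the mechanism you offer for why $\H$ appears. You attribute the factor $1+q+\cdots+q^{m-1}$ to ``newly created inversions'' arising when the label set of a subtree of size $m$ is merged with the labels already present in the growing tree. But no such inversions exist: an inversion requires an ancestor--descendant pair, and nodes lying in distinct subtrees of the root are never so related, so the order-preserving distribution of $\{2,\dots,n\}$ among the subtrees is inversion-neutral. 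Pursued literally, your insertion lemma would contribute a factor $1$ per subtree and yield $\DZ F = \varphi(F)$, which is false.

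The correct source of the factor is internal to each subtree. Writing $T(z,q) := \sum_{n,k} T_{n,k}\, q^k z^n/n!$ for the inversion generating function over \emph{all} of $\T$ (arbitrary root label), the paper proves $T(z,q)=\H F(z,q)$ via the bijection that moves the root's label from $1$ to $j$: a tree of size $n$ with root labelled $1$ and $k$ inversions corresponds to a tree with root labelled $j$ and $k+j-1$ inversions, because the root is an ancestor of every other node and its label $j$ exceeds exactly $j-1$ of theirs. Summing over $j=1,\dots,n$ produces the factor $1+q+\cdots+q^{n-1}=\frac{1-q^n}{1-q}$, i.e.\ exactly the operator $\H$. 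With this in hand, the subtrees of the root of a tree in $\hT$ are, after order-preserving relabelling, arbitrary members of $\T$ with inversion counts preserved and additive, and the symbolic method gives $\DZ F(z,q)=\varphi(T(z,q))=\varphi(\H F(z,q))$. So you should replace your ``merge'' lemma by this root-label bijection (equivalently, a statement about the rank of each subtree's root among that subtree's own labels); the remainder of your plan then goes through exactly as in the paper.
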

\begin{proof}
 This equation can be obtained by establishing mutually dependent recurrences for the total
 weights $T_{n,k} := \PP{I_n = k} T_n$ and $\hat{T}_{n,k} := \PP{\hI_n = k} \frac{T_n}{n}$
 of trees of size $n$ with $k$ inversions in $\T$ and $\hT$, respectively.
 Nevertheless, we confine ourselves to give a combinatorial argument at this point.
 
 In order to derive \eqref{eqn:F-HF}, we establish relations between $\T$ and $\hT$,
 which can be translated into functional equations for 
 $F(z,q) = \sum_{n \geq 1} \sum_{k \geq 0} \hat{T}_{n,k} q^k \frac{z^n}{n!}$ and
 \[
  T(z,q) := \sum_{n \geq 1} \sum_{k \geq 0} T_{n,k} q^k \frac{z^n}{n!}.
 \]
 For this purpose, we consider the sets $\T_n$ and $\hT_n$, which contain exactly the trees of size 
 $n$ of $\T$ and $\hT$, respectively. Clearly, $\T_n$ can be partitioned into $n$ disjoint 
 subsets $\T^{(1)}_n = \hT_n, \T^{(2)}_n \ldots, \T^{(n)}_n$, where $\T^{(j)}_n$ contains exactly 
 those trees in which the root is labelled by $j$. Now consider the bijective mapping between 
 $\hT_n$ and $\T^{(2)}_n$ which is obtained by just switching the labels $1$ and $2$ in each tree, 
 and leaving all other labels and the structure of each tree unchanged. Since this mapping does
 not alter the relative order of any pair of nodes except $(1,2)$, it clearly holds that each 
 tree of $\hT_n$ with $k$ inversions is mapped to a tree of $\T^{(2)}_n$ with $k+1$ inversions. 
 Repeating this argument, we see that each tree in $\hT_n$ with $k$ inversions can bijectively be 
 mapped to a tree with $k+j-1$ inversions in $\T^{(j)}_n$.
 This leads for the generating functions $F(z,q)$ and $T(z,q)$ to the equation
 \begin{equation}\label{eqn:T-HF}
  T(z,q) = \sum_{n \geq 1} \sum_{k \geq 0} \hat{T}_{n,k} 
  \underbrace{(1 + \ldots + q^{n-1})}_{\frac{1-q^n}{1-q}} q^k \frac{z^n}{n!} = \H F(z,q).
 \end{equation}
 Next, remember that $\T$ is defined by the formal
 equation $\T = \bigcirc * \varphi(\T)$, and that $\hT$ consists exactly of those trees of $\T$ in 
 which the root has label $1$. It thus follows that $\hT$ satisfies the formal equation
 \begin{equation}\label{eqn:hT-T}
  \hT = \nodeOne \,\times \varphi(\T).
 \end{equation}
 Due to the observation that the root node $\nodeOne$ of any tree in $\hT$ does not contribute to 
 the number of inversions, equation \eqref{eqn:hT-T} can be translated by an application of the 
 symbolic method to the differential equation
 \[
  \D_z F(z,q) = \varphi(T(z,q)).
 \]
 Using equation \eqref{eqn:T-HF}, we thus obtain \eqref{eqn:F-HF}.
\end{proof}

\subsection{Application of the pumping method}
In order to extract expressions for the functions $f_r(z)$ as defined in \eqref{eqn:f_r} from \eqref{eqn:F-HF}, we use the 
pumping method from \cite{Flajolet-Poblete-Viola_LinearProbingHashing}. 
This method basically rests on the idea of applying the operator $\V \DQ^r$ to the given 
functional equation involving $\H$, and using a "commutation rule" for the operators $\V \DQ^r$ 
and $\H$. 
Since our operator $\H$ is slightly different from the one in 
\cite{Flajolet-Poblete-Viola_LinearProbingHashing}, we will first establish the suitable 
commutation rule for our case. 
\begin{lemma}\label{lemma:commutation} The operator $\V \DQ^j \H $ satisfies the operator equation
\begin{equation}\label{eqn:commutation}
\V \DQ^j \H = \sum_{s=0}^j \binom{j}{s} \frac{1}{s+1} \Z^{s+1} \DZ^{s+1} \V \DQ^{j-s}.
\end{equation}
\end{lemma}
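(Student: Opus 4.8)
The plan is to verify the operator identity \eqref{eqn:commutation} coefficient-wise. Since every operator occurring in it is linear and continuous on the ring of formal power series in $z$ and $q$, it suffices to show that both sides produce the same coefficient of each power $z^n$ when applied to an arbitrary series $G(z,q) = \sum_{n \geq 0} g_n(q) z^n$. So I would fix such a $G$ and track the coefficient of $z^n$ through both sides.

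First I would record the action of $\H$. Since $G(z,q) - G(qz,q) = \sum_{n} g_n(q)(1-q^n) z^n$, dividing by $1-q$ yields $\H G(z,q) = \sum_{n \geq 0} g_n(q)\, [n]_q\, z^n$, where $[n]_q := 1 + q + \cdots + q^{n-1} = \frac{1-q^n}{1-q}$ is the usual $q$-integer; that is, $\H$ merely rescales the $n$-th coefficient by $[n]_q$. Applying $\DQ^j$ and Leibniz's rule then gives $\DQ^j\big(g_n(q)[n]_q\big) = \sum_{s=0}^j \binom{j}{s} \big(\DQ^s [n]_q\big)\big(\DQ^{j-s} g_n(q)\big)$, after which $\V$ sets $q=1$.

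The crucial computation, and essentially the only place where anything nontrivial happens, is the value of $\DQ^s [n]_q$ at $q=1$. Differentiating $[n]_q = \sum_{i=0}^{n-1} q^i$ term by term gives $\DQ^s [n]_q = \sum_{i=0}^{n-1} i^{\ff{s}}\, q^{i-s}$, so at $q=1$ one obtains $\sum_{i=0}^{n-1} i^{\ff{s}}$. Writing $i^{\ff{s}} = s!\binom{i}{s}$ and invoking the hockey-stick identity $\sum_{i=0}^{n-1}\binom{i}{s} = \binom{n}{s+1}$, this simplifies to $\frac{1}{s+1}\, n^{\ff{s+1}}$. This single evaluation is exactly what forces the binomial weights $\binom{j}{s}$, the factor $\frac{1}{s+1}$, and the operator $\Z^{s+1}\DZ^{s+1}$ to appear on the right-hand side.

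Finally I would match the two sides. The operator $\frac{1}{s+1}\Z^{s+1}\DZ^{s+1}$ sends $z^n$ to $\frac{1}{s+1}\, n^{\ff{s+1}}\, z^n$ (the $s+1$ derivatives produce the falling factorial, and $\Z^{s+1}$ restores the power of $z$), while $\V\DQ^{j-s}$ replaces $g_n(q)$ by $(\DQ^{j-s} g_n)(1)$. Hence the coefficient of $z^n$ in the right-hand side of \eqref{eqn:commutation} applied to $G$ equals $\sum_{s=0}^j \binom{j}{s} \frac{1}{s+1}\, n^{\ff{s+1}}\,(\DQ^{j-s} g_n)(1)$, which coincides term by term with the expression obtained above from the left-hand side. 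As $G$ was arbitrary, the operator identity follows. I do not anticipate any serious obstacle beyond carrying out the evaluation $\V\DQ^s [n]_q = \frac{1}{s+1}\, n^{\ff{s+1}}$ correctly; everything else is bookkeeping of linear operators acting on monomials.
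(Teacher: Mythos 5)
Your proof is correct and follows essentially the same route as the paper's: both reduce the identity to the action of $\H$ on the $n$-th $z$-coefficient (multiplication by $1+q+\cdots+q^{n-1}$), split the $j$-fold $q$-derivative of the resulting product via Leibniz (the paper does this in the equivalent form of a Vandermonde expansion of $(k+i)^{\ff{j}}$ on monomials $q^kz^n$), and hinge on the same evaluation $\sum_{i=0}^{n-1} i^{\ff{s}} = s!\binom{n}{s+1} = \frac{n^{\ff{s+1}}}{s+1}$. The only cosmetic difference is that you work with a general coefficient series $g_n(q)$ where the paper restricts to monomials by linearity.
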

\begin{proof}
Since all occurring operators are linear, it suffices to show that the two sides of the equation 
coincide when applied to a function of the form $G(z,q) = q^k z^n$.
Remember that $\H (q^k z^n) = q^k (1 + q + \cdots + q^{n-1}) z^n$, and thus we have
\begin{eqnarray*}
\V \DQ^j \H (q^k z^n) &=& z^n \sum_{i=0}^{n-1} \V \DQ^j (q^k q^i)
= z^n \sum_{i=0}^{n-1} \sum_{s=0}^j \binom{j}{s} k^{\ff{j-s}} i^{\ff{s}} \\
&=& z^n \sum_{s=0}^j \binom{j}{s} k^{\ff{j-s}} s! \sum_{i=0}^{n-1} \binom{i}{s} 
= z^n \sum_{s=0}^j \binom{j}{s} k^{\ff{j-s}} s! \binom{n}{s+1}\\ 
&=& \sum_{s=0}^j \binom{j}{s} k^{\ff{j-s}} \frac{n^{\ff{s+1}} z^n}{s+1}  
= \sum_{s=0}^j \binom{j}{s} \frac{1}{s+1} \Z^{s+1} \DZ^{s+1} \V \DQ^{j-s} (q^k z^n).
\end{eqnarray*}
\end{proof}
Using \eqref{eqn:commutation}, we can now establish a recurrence for the derivatives $f'_r(z)$
of the factorial moment generating functions:
\begin{lemma}\label{lemma:f_r}
The factorial moment generating functions $f_r(z)$ satisfy, for $r \geq 1$,
\begin{multline}\label{eqn:recursion-general}
f'_r(z) = \frac{1}{1 - z \varphi'(T(z))} \left( \varphi'(T(z)) \sum_{t=1}^r \binom{r}{t} 
    \frac{1}{t+1} z^{t+1} f^{(t+1)}_{r-t} (z) \right.\\
\left. + \sum_{(k_1, \ldots, k_{r-1}) \in B_r} \frac{r!}{k_1! \cdots k_{r-1}!} 
    \varphi^{(k_1 + \ldots + k_{r-1})} (T(z)) \prod_{m=1}^{r-1} \left( \frac{1}{m!} 
    \sum_{s=0}^m \binom{m}{s} \frac{1}{s+1} z^{s+1} f^{(s+1)}_{m-s}(z) \right)^{k_m} \right),
\end{multline}
where $B_1 := \emptyset$, and
\[
 B_r := \set{(k_1, k_2, \ldots, k_{r-1}) \in \NN_0^{r-1} : k_1 + 2k_2 + \ldots + (r-1) k_{r-1} = r},
\]
for $r \geq 2$.
\end{lemma}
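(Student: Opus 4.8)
The plan is to apply the operator $\V \DQ^r$ to both sides of the functional equation \eqref{eqn:F-HF} and to simplify each side by combining Fa\`a di Bruno's formula with the commutation rule \eqref{eqn:commutation} and the fact that evaluation at $q=1$ (the operator $\V$) is a ring homomorphism. On the left-hand side, since $\DZ$ commutes with both $\DQ$ and $\V$, I would immediately get $\V \DQ^r \DZ F(z,q) = \DZ \V \DQ^r F(z,q) = f_r'(z)$, recalling the definition of $f_r$ in \eqref{eqn:f_r}. All the real work is on the right-hand side.

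There I would first expand $\DQ^r \varphi(\H F(z,q))$ by Fa\`a di Bruno's formula for the higher derivatives of a composition, getting a sum over tuples $(k_1,\dots,k_r) \in \NN_0^r$ with $k_1 + 2k_2 + \cdots + r k_r = r$ of terms
\[
 \frac{r!}{k_1! \cdots k_r!}\, \varphi^{(K)}(\H F) \prod_{m=1}^r \left( \frac{\DQ^m \H F}{m!} \right)^{k_m}, \qquad K := k_1 + \cdots + k_r.
\]
Applying $\V$ and using multiplicativity of evaluation at $q=1$, each factor can be handled separately. For the outer derivative I would use $\V \H F = T(z)$ (which is just \eqref{eqn:T-HF} specialised to $q=1$), so that $\V \varphi^{(K)}(\H F) = \varphi^{(K)}(T(z))$. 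For the inner factors I would invoke the commutation rule \eqref{eqn:commutation}, which yields
\[
 \V \DQ^m \H F = \sum_{s=0}^m \binom{m}{s} \frac{1}{s+1} z^{s+1} f_{m-s}^{(s+1)}(z),
\]
precisely the bracketed expression appearing in the statement.

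The final and most delicate step is to isolate the one contribution still involving $f_r$ and transpose it. The key observation is that $f_r$ can enter only through $\V \DQ^m \H F$ with $m-s = r$; since $m \le r$ and $s \ge 0$, this forces $m = r$, $s = 0$, giving the summand $z f_r'(z)$. In the Fa\`a di Bruno sum the factor with $m=r$ appears with $k_r \ge 1$ only for the single tuple $(0,\dots,0,1)$, because $r k_r \le r$ forces $k_r = 1$ and all other $k_m = 0$; this tuple carries coefficient $r!$, derivative $\varphi'(T(z))$, and exactly the factor $\V \DQ^r \H F$. Splitting off its $s=0$ part produces $\varphi'(T(z))\, z\, f_r'(z)$, and moving it to the left gives the prefactor $\frac{1}{1 - z\varphi'(T(z))}$. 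The leftover $s \ge 1$ part of this tuple supplies the first sum in \eqref{eqn:recursion-general} (with index renamed $t$), while every tuple with $k_r = 0$ is indexed exactly by the set $B_r$ and, having the $m=r$ factor raised to the zeroth power, supplies the second sum. I expect the main obstacle to be purely the bookkeeping of Fa\`a di Bruno together with the commutation expansion --- specifically, verifying that $f_r$ is isolated to a single summand and that the residual index set coincides with $B_r$, with the degenerate case $B_1 = \emptyset$ correctly recovering the recurrence for $r=1$.
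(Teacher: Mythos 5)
Your proposal is correct and follows essentially the same route as the paper's proof: apply $\V \DQ^r$ to \eqref{eqn:F-HF}, expand the right-hand side via Fa\'a di Bruno's formula, use the commutation rule of Lemma~\ref{lemma:commutation} together with $\V \H F(z,q) = T(z)$, and solve for $f_r'(z)$. The bookkeeping you carry out explicitly --- that $f_r'$ occurs only in the tuple $(0,\dots,0,1)$ via the $m=r$, $s=0$ summand, and that the tuples with $k_r=0$ are indexed exactly by $B_r$ --- is precisely the detail the paper compresses into ``solving for $f'_r(z)$''.
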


\begin{proof}
We apply $\V \DQ^r$ to \eqref{eqn:F-HF} and express $\DQ^r \varphi(\H F(z,q))$ using Fa\'a di 
Bruno's formula for higher derivatives of composite functions,
\[
\DQ^r f(g(q)) = \sum_{(k_1, \ldots, k_r) \in A_r} \frac{r!}{k_1! \cdots k_r!}
(\DQ^{(k_1+\ldots+k_r)} f)(g(q))
\prod_{m=1}^r\left(\frac{\DQ^m g(q)}{m!}\right)^{k_m},
\]
where $A_r := \set{(k_1, k_2, \ldots, k_r) \in \NN_0^r : k_1 + 2k_2 + \ldots + r k_r = r}$. 
We then obtain the claimed result by applying Lemma \ref{lemma:commutation}, solving for $f'_r(z)$ 
and using the fact that
\[
\V \varphi(\H F(z,q)) = \varphi(\V \H F(z,q)) = \varphi(\Z \DZ \V F(z,q)) = \varphi(T(z)),
\]
which follows from \eqref{eqn:commutation} and \eqref{eqn:F-T}.
\end{proof}

\subsection{Singularity analysis}
We now investigate the singular behaviour of the functions in \eqref{eqn:recursion-general} in 
order to compute the factorial moments $\EE{\hI_n^{\ff{r}}}$ asymptotically.
In the following, we carry out only the computations for the case $d=1$ (where $d$ is defined by \eqref{eqn:d} and thus gives the number of dominant singularities of the functions considered).
The general case runs completely analogous: when applying singularity analysis, one just has to 
take care of the contributions of all $d$ singularities and add them.

In a first step, we want to find an asymptotic formula for the expected value $\EE{\hI_n}$.
From Lemma \ref{lemma:f_r}, we have
\[
f'_1(z) = \frac{\varphi'(T(z)) \frac{z^2}{2} f''_0(z)}{1-z\varphi'(T(z))},
\]
and using the fact that
$f''_0(z) = T'(z) \varphi'(T(z)) = \frac{\varphi(T(z)) \varphi'(T(z))}{1-z\varphi'(T(z))}$, which
is easily obtained by differentiating \eqref{eqn:T} and \eqref{eqn:F-T}, we get
\[
f'_1(z) = \frac{1}{2} \frac{z (\varphi'(T(z)))^2 T(z)}{(1-z\varphi'(T(z)))^2}.
\]
Note that $z\varphi'(T(z)) \neq 1$ for $|z| \leq \rho$, which can be seen by 
differentiating \eqref{eqn:T}, and thus $f'_1(z)$ inherits the dominant singularity at 
$z = \rho$ from $T(z)$ and $\varphi'(T(z))$.
Using the expansions \eqref{eqn:T_exp} and \eqref{eqn:zphi_exp}, we find
\begin{equation}\label{eqn:f_1_exp}
\begin{split}
z f'_1(z) &= \frac{1}{2} \frac{\oneplusOO{(\rho - z)^{1/2}}^2  \tau \oneplusOO{(\rho - z)^{1/2}}}
    {2 \rho \tau \varphi''(\tau) \left( 1 - \frac{z}{\rho}\right) \oneplusOO{(\rho - z)^{1/2}}},\\
 &= \frac{1}{4 \rho \varphi''(\tau) \left( 1 - \frac{z}{\rho}\right)} 
    \oneplusOO{(\rho - z)^{1/2}}, \quad z \to \rho.
\end{split}
\end{equation} 
By applying basic singularity analysis, this immediately yields
\[
[z^n] z f'_1(z) = \frac{1}{4 \rho^{n+1} \varphi''(\tau)} \left( 1 + \OO{n^{-1/2}}\right).
\]
Now, using \eqref{eqn:fact-mom} and \eqref{eqn:T_n}, we get the expected value
\begin{eqnarray*}
\EE{\hI_n} &=& \frac{[z^n] z f'_1(z)}{[z^n] T(z)} = 
    \frac{1}{\rho} \sqrt{\frac{\pi}{8 \varphi(\tau) \varphi''(\tau)}} n^{3/2} \oneplusOO{n^{-1/2}}\\
 &=& c_{\varphi} \sqrt{\pi} n^{3/2} \oneplusOO{n^{-1/2}},
\end{eqnarray*}
where $c_{\varphi}$ is defined as in Theorem \ref{thm:global}.

We will now consider $f'_r(z)$ for general $r$. It turns out that all $f'_r(z)$ have a unique 
dominant singularity at $z = \rho$. 
The singular expansions around this point are given in the following lemma.
\begin{lemma}\label{lemma:f_r-singular_behaviour-general}
For $r \geq 1$, each $f'_r(z)$ has a unique dominant singularity at $z = \rho$, where the expansion
\begin{equation}\label{eqn:f_r_exp}
z f'_r(z) = c_\varphi^r \sqrt{\frac{\varphi(\tau)}{2 \varphi''(\tau)}} \frac{2 C_r}{\left( 1 - \frac{z}{\rho}\right)^{(3r-1)/2}} \left( 1 + \OO{(\rho-z)^{1/2}}\right), \quad z \to \rho,
\end{equation}
holds. Here, the constants $C_r$ are defined as in \eqref{eqn:C_r}.
\end{lemma}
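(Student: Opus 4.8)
The plan is to prove \eqref{eqn:f_r_exp} by induction on $r$, propagating the singular expansion through the recurrence \eqref{eqn:recursion-general} of Lemma~\ref{lemma:f_r} (working, as in the surrounding text, in the case $d=1$, the general case being identical up to summing over the $d$ singularities). The base case $r=1$ is already contained in \eqref{eqn:f_1_exp}: using $2C_1 = 1$, the identity $\varphi(\tau) = \tau/\rho$ coming from $\rho = \tau/\varphi(\tau)$, and the definition $c_\varphi = 1/\sqrt{8\rho\tau\varphi''(\tau)}$, one checks that \eqref{eqn:f_1_exp} is exactly the $r=1$ instance of \eqref{eqn:f_r_exp}. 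The one analytic fact I would isolate first is the action of $\DZ$ on singular expansions: if $g$ is analytic in a $\Delta$-domain at $\rho$ with $g(z) = c\,(1-z/\rho)^{-\alpha}\oneplusOO{(\rho-z)^{1/2}}$, then $\DZ g(z) = \tfrac{c\alpha}{\rho}(1-z/\rho)^{-\alpha-1}\oneplusOO{(\rho-z)^{1/2}}$, so that $\DZ^{s}$ multiplies the leading constant by $\rho^{-s}\alpha^{\rf{s}}$ and raises the exponent by $s$. With this, the induction hypothesis applied to $f'_m$, $m<r$, determines the singular order and leading constant of every $f_{m}^{(s+1)}(z)$ occurring in \eqref{eqn:recursion-general}.

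The heart of the argument is a bookkeeping of singular orders in \eqref{eqn:recursion-general}. Writing $u := 1-z/\rho$, the hypothesis gives that $f_{r-t}^{(t+1)}(z)$ has singular order $(3r-t-1)/2$, so in the first sum the summand indexed by $t$ has that order; since the prefactor $1/(1-z\varphi'(T(z)))$ contributes an extra $u^{-1/2}$ by \eqref{eqn:zphi_exp}, only the term $t=1$ attains the maximal order $(3r-1)/2$, every $t\ge 2$ being strictly smaller and hence absorbed into the error. In the product over $B_r$, each factor is dominated by its $s=0$ summand $\tfrac{1}{m!}z f'_m(z)$ of order $(3m-1)/2$, so a partition with $K := k_1+\dots+k_{r-1}$ parts produces order $(3r-K)/2$; because $m\le r-1$ forces $K\ge 2$, only the two-part partitions, equivalently the unordered splittings $r=a+b$, reach order $(3r-1)/2$ after the prefactor. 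The step I expect to cost the most care is precisely this control of error terms: one must justify that $\DZ$ keeps the error a genuine half-power below the main term (via analyticity in a $\Delta$-domain and the transfer and differentiation theorems of \cite{Flajolet-Odlyzko_SingularityAnalysis}), and that the denominator $1-z\varphi'(T(z))$ has no further zero on $|z|\le\rho$, so that $\rho$ is indeed the unique dominant singularity.

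Finally, I would assemble the two surviving contributions and read off the leading constant. The $t=1$ term of the first sum, evaluated with $\varphi'(T(\rho))=\varphi'(\tau)$ and one application of $\DZ$ to $f'_{r-1}$, contributes a multiple of $r(3r-4)C_{r-1}$. The two-part partitions of the second sum carry $\varphi''(T(\rho))=\varphi''(\tau)$, the multinomial coefficients $r!/(a!\,b!)=\binom{r}{a}$ for $a+b=r$, and the products of the dominant parts $\tfrac{1}{a!}z f'_a(z)$ and $\tfrac{1}{b!}z f'_b(z)$; collecting the off-diagonal pairs together with the diagonal term $a=b=r/2$ reconstitutes the ordered sum, so these assemble into a multiple of $\sum_{a=1}^{r-1}\binom{r}{a}C_a C_{r-a}$. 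Matching the resulting leading coefficient of $z f'_r(z)$ against the ansatz \eqref{eqn:f_r_exp}, and simplifying with $\tau\varphi'(\tau)=\varphi(\tau)$ from \eqref{eqn:tphi'}, the consequences $\rho\varphi'(\tau)=1$ and $\varphi(\tau)=\tau/\rho$, and $c_\varphi=1/\sqrt{8\rho\tau\varphi''(\tau)}$, all structural constants collapse and the relation for the leading coefficients reduces to exactly $2C_r=(3r-4)r\,C_{r-1}+\sum_{j=1}^{r-1}\binom{r}{j}C_jC_{r-j}$, i.e.\ to \eqref{eqn:C_r}. Since $C_1=\tfrac12>0$ and the recurrence has nonnegative terms, every $C_r$ is positive, so no cancellation destroys the leading term; this closes the induction and establishes \eqref{eqn:f_r_exp}.
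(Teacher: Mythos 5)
Your proposal is correct and follows essentially the same route as the paper: induction on $r$ via the recurrence \eqref{eqn:recursion-general}, singular differentiation to handle the higher derivatives, identification of the dominant contributions as the $t=1$ term and the two-part partitions ($k_1+\dots+k_{r-1}=2$) with $s=0$, and matching of the leading constant against the recurrence \eqref{eqn:C_r}. The bookkeeping of singular orders and the verification of the base case against \eqref{eqn:f_1_exp} are exactly as in the paper's argument.
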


\begin{proof}
One easily checks that in the case $r=1$ equation \eqref{eqn:f_r_exp} coincides with 
\eqref{eqn:f_1_exp}. For $r>1$ we proceed by induction, following the inductive
definition \eqref{eqn:C_r} of the constants $C_r$. 
So let $r > 1$ and assume that \eqref{eqn:f_r_exp} holds for all functions $f_j(z)$ with 
$1 \leq j < r$.
By the rules for singular differentiation \cite{Fill-Flajolet-Kapur_SingularityAnalysis} we
then also have the following singular expansions for the $k$-th derivatives $f^{(k)}_j$ of
the functions $f_j(z)$, for all $k \geq 1$ and $1 \leq j < r$:
\begin{equation}\label{eqn:zf_k_j}
z f^{(k)}_j(z) = c_\varphi^r \sqrt{\frac{\varphi(\tau)}{2 \varphi''(\tau)}} \frac{2 C_j \cdot \left( \frac{3j-1}{2} \right)^{\rf{k-1}}}{\rho^{k-1} \left( 1 - \frac{z}{\rho}\right)^{(3j-3+2k)/2}} \left( 1 + \OO{(\rho-z)^{1/2}}\right), \quad z \to \rho.
\end{equation}
From this one concludes that the dominant contributions in \eqref{eqn:recursion-general} can only 
arise from the terms corresponding to $t=1$ and $s=0$, i.e.,
\begin{multline*}
zf'_r(z) = \frac{z}{1 - z \varphi'(T(z))} \bigg( \varphi'(T(z)) \frac{r}{2} z^2 f''_{r-1}(z)\\
\mbox{} + \sum_{(k_1, \ldots, k_{r-1}) \in B_r} \frac{r!}{k_1! \cdots k_{r-1}!} \varphi^{(k_1 + \ldots + k_{r-1})} (T(z)) \prod_{m=1}^{r-1} \left( \frac{z f'_m(z)}{m!} \right)^{k_m} \bigg) \left( 1 + \OO{(\rho-z)^{1/2}} \right).
\end{multline*}
Now note that 
\[
\prod_{m=1}^{r-1} \left( \frac{1}{m!} z f'_m(z) \right)^{k_m} = \OO{\frac{1}{(\rho-z)^{(3r - (k_1 + \ldots + k_{r-1}))/2}}},
\]
hence the dominant terms in the remaining sum correspond to those $(k_1, \ldots, k_{r-1}) \in B_r$ 
with $k_1 + \ldots + k_{r-1} = 2$, and we thus get
\begin{eqnarray*}
zf'_r(z) &=& \frac{z}{1 - z \varphi'(T(z))} \bigg( \varphi'(T(z)) \frac{r}{2} z^2 f''_{r-1}(z)\\
& & \mbox{} + \sum_{s=1}^{r-1} \frac{r!}{2} \varphi'' (T(z)) z^2 \frac{f'_s(z)}{s!}  \frac{f'_{r-s}(z)}{(r-s)!} \bigg) \left( 1 + \OO{(\rho-z)^{1/2}} \right).
\end{eqnarray*}
Now, expanding the occurring functions using \eqref{eqn:phi_k}, \eqref{eqn:zphi_exp} and
\eqref{eqn:zf_k_j}, we obtain after some simplifications
\begin{eqnarray*}
zf'_r(z) &=& \frac{\rho}{\sqrt{2 \rho \tau \varphi''(\tau)} \sqrt{1 - \frac{z}{\rho}}} 
    \left( \frac{r}{2} c_{\varphi}^{r-1} \sqrt{\frac{\varphi(\tau)}{2 \varphi''(\tau)}} 
    \frac{2 C_{r-1} \cdot \frac{3(r-1)-1}{2}}{\rho \left( 1 - \frac{z}{\rho}\right)^{(3(r-1)+1)/2}}
    \right.\\
& & \quad \left. \mbox{} + \frac{1}{2} \sum_{s=1}^{r-1} \binom{r}{s} \varphi''(\tau) 
c_\varphi^{s} c_\varphi^{r-s} \left(\frac{\varphi(\tau)}{2 \varphi''(\tau)}\right) \frac{2 C_s \cdot 2 C_{r-s}}{\left( 1 - \frac{z}{\rho}\right)^{(3s-1)/2 + (3(r-s)-1)/2}}
 \right)\\
 & & \times \oneplusOO{(\rho-z)^{1/2}}\\ 
&=& c_\varphi^r \sqrt{\frac{\varphi(\tau)}{2 \varphi''(\tau)}} \frac{(3r-4)r C_{r-1} + \sum_{s=1}^{r-1} \binom{r}{s} C_s C_{r-s}}{\left( 1 - \frac{z}{\rho}\right)^{(3r-1)/2}} \left( 1 + \OO{(\rho-z)^{1/2}}\right),\\
&=& c_\varphi^r \sqrt{\frac{\varphi(\tau)}{2 \varphi''(\tau)}} \frac{2 C_r}{\left( 1 - \frac{z}{\rho}\right)^{(3r-1)/2}} \left( 1 + \OO{(\rho-z)^{1/2}}\right), \quad z \to \rho.
\end{eqnarray*}
\end{proof}
Lemma \ref{lemma:f_r-singular_behaviour-general} can now be used in order to compute the moments
of $\hI_n$ asymptotically:
\begin{lemma}\label{lem:hI_n}
The random variable $\hI_n$ satisfies
\[
\EE{\hI_n^{r}} = 
    \frac{2 \sqrt{\pi} c_\varphi^r n^{3r/2}}{\Gamma(\frac{3r-1}{2})} C_r \left(1 + \OO{n^{-1/2}}\right).
\] 
\end{lemma}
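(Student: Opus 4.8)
The plan is to read off the asymptotics of the factorial moments $\EE{\hI_n^{\ff{r}}}$ directly from the singular expansion just established in Lemma~\ref{lemma:f_r-singular_behaviour-general} by singularity analysis, and then to pass from factorial to ordinary moments. The substantial analytic work has already been carried out in the preceding lemma, so the present statement amounts to transferring the singularity, dividing by the known asymptotics of $[z^n] T(z)$, and bookkeeping the constants.

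First I would apply the transfer theorem of Flajolet and Odlyzko to the expansion
\[
z f'_r(z) = c_\varphi^r \sqrt{\tfrac{\varphi(\tau)}{2\varphi''(\tau)}}\, \frac{2 C_r}{\left(1-\frac{z}{\rho}\right)^{(3r-1)/2}} \oneplusOO{(\rho-z)^{1/2}}, \quad z \to \rho,
\]
observing that the error term of order $(\rho-z)^{1/2}$ lowers the singular exponent by $\tfrac12$ and hence contributes only a relative factor $\OO{n^{-1/2}}$ after coefficient extraction. Using $[z^n](1-z/\rho)^{-\alpha} = \frac{n^{\alpha-1}}{\rho^n \Gamma(\alpha)} \oneplusOO{1/n}$ with $\alpha = (3r-1)/2$ gives
\[
[z^n] z f'_r(z) = c_\varphi^r \sqrt{\tfrac{\varphi(\tau)}{2\varphi''(\tau)}}\, \frac{2 C_r\, n^{(3r-3)/2}}{\rho^n\,\Gamma(\frac{3r-1}{2})} \oneplusOO{n^{-1/2}}.
\]
I would then invoke \eqref{eqn:fact-mom}, namely $\EE{\hI_n^{\ff{r}}} = [z^n] z f'_r(z) / [z^n] T(z)$, and divide by the asymptotics $[z^n] T(z) = \frac{\sqrt{\varphi(\tau)}}{\sqrt{2\pi\varphi''(\tau)}\,\rho^n n^{3/2}} \oneplusOO{1/n}$ from \eqref{eqn:T_n} (specialized to $d=1$). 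The factors $\rho^n$ cancel, the powers of $n$ combine to $n^{(3r-3)/2 + 3/2} = n^{3r/2}$, and the square-root prefactors collapse via $\sqrt{\frac{\varphi(\tau)}{2\varphi''(\tau)}} \cdot \sqrt{\frac{2\pi\varphi''(\tau)}{\varphi(\tau)}} = \sqrt{\pi}$, leaving
\[
\EE{\hI_n^{\ff{r}}} = \frac{2\sqrt{\pi}\, c_\varphi^r}{\Gamma(\frac{3r-1}{2})}\, C_r\, n^{3r/2} \oneplusOO{n^{-1/2}}.
\]

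Finally I would convert factorial moments into ordinary moments. Writing $\hI_n^{\ff{r}} = \hI_n^r + \OO{\hI_n^{r-1}}$ as a polynomial in $\hI_n$ and taking expectations gives $\EE{\hI_n^r} = \EE{\hI_n^{\ff{r}}} + \OO{\EE{\hI_n^{r-1}}}$. The correction is governed by the $(r-1)$-th moment, which by the same computation is of order $n^{3(r-1)/2}$, i.e.\ a factor $\OO{n^{-3/2}}$ smaller than the main term and thus absorbed into the $\oneplusOO{n^{-1/2}}$ error, yielding the claimed asymptotics for $\EE{\hI_n^r}$. The only point requiring genuine care is this last passage: one must confirm that the lower factorial-moment contributions are of strictly smaller polynomial order in $n$ (which follows from the $r$-dependence $n^{3r/2}$ of the leading terms), so that neither the leading constant $\frac{2\sqrt{\pi}\,c_\varphi^r C_r}{\Gamma((3r-1)/2)}$ nor the error order $\OO{n^{-1/2}}$ is affected.
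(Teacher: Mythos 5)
Your proposal is correct and follows essentially the same route as the paper: transfer the singular expansion of $z f'_r(z)$ via singularity analysis, divide by $[z^n]T(z)$ using \eqref{eqn:fact-mom} and \eqref{eqn:T_n}, and absorb the factorial-to-ordinary moment correction (of relative order $n^{-3/2}$) into the $\OO{n^{-1/2}}$ error term. The constant bookkeeping ($\rho^n$ cancellation, $n^{3r/2}$, and the collapse to $\sqrt{\pi}$) all checks out.
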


\begin{proof}
By singularity analysis, it follows from Lemma \ref{lemma:f_r-singular_behaviour-general} that
\[
    [z^n] z f'_r(z) = 2 c_\varphi^r \sqrt{\frac{\varphi(\tau)}{2 \varphi''(\tau)}} 
        \frac{n^{\frac{3r-1}{2} - 1}}{\Gamma(\frac{3r-1}{2})} C_r \oneplusOO{n^{-1/2}},
\]
and together with \eqref{eqn:fact-mom} and \eqref{eqn:T_n} this shows
\[
\EE{\hI_n^{\ff{r}}} = \frac{[z^n] z f'_r(z)}{[z^n] T(z)} = 
    \frac{2 \sqrt{\pi} c_\varphi^r n^{3r/2}}{\Gamma(\frac{3r-1}{2})} C_r \left(1 + \OO{n^{-1/2}}\right).
\]
Using the relation between the factorial moments and the ordinary moments of a random variable $Y$:
\begin{equation}\label{eqn:facordmom}
  \mathbb{E}(Y^{r}) = \sum_{\ell=0}^{r} \Stir{r}{\ell} \, \mathbb{E}(Y^{\underline{\ell}}),
\end{equation}
with $\Stir{r}{\ell}$ the Stirling numbers of second kind, we obtain that $\EE{\hI_n^{r}} = \EE{\hI_n^{\ff{r}}} + \OO{\EE{\hI_n^{\ff{r-1}}}}$, and
hence we get the desired result.
\end{proof}

\subsection{Transfer of the result to $I_n$} 
We now transfer the result for $\hI_n$ to the random variable $I_n$,
which counts the total number of inversions in a random tree of size $n$ of $\T$. In fact,
we prove that the moments of $\hI_n$ and $I_n$ coincide asymptotically:
\begin{lemma}\label{lem:I_n}
 The random variable $I_n$ satisfies
\begin{equation}\label{eqn:I_n}
\EE{I_n^{r}} = 
    \frac{2 \sqrt{\pi} c_\varphi^r n^{3r/2}}{\Gamma(\frac{3r-1}{2})} C_r \left(1 + \OO{n^{-1/2}}\right).
\end{equation}
\end{lemma}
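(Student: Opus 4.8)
The plan is to reduce the moments of $I_n$ to the already established moment asymptotics for $\hI_n$ from Lemma~\ref{lem:hI_n}, by exploiting the explicit relation between the two random variables furnished by the root-label decomposition used earlier. Concretely, I would first establish the distributional identity
\[
 I_n \stackrel{(d)}{=} \hI_n + (J-1),
\]
where $J$ is uniformly distributed on $\setfromto{1}{n}$ and independent of $\hI_n$, and then read off the moment asymptotics by a routine binomial expansion.

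To obtain this identity, recall the partition of $\T_n$ into the sets $\T_n^{(1)} = \hT_n, \ldots, \T_n^{(n)}$ according to the label $j$ of the root, together with the bijection between $\hT_n$ and $\T_n^{(j)}$ that sends a tree with $k$ inversions to one with $k+j-1$ inversions. This bijection, obtained by relabelling, leaves all out-degrees and hence the weight $w(T)$ unchanged, so each $\T_n^{(j)}$ carries total weight $\frac{T_n}{n}$; consequently the root label $J$ of a random tree of $\T_n$ is uniform on $\setfromto{1}{n}$, and conditioned on $J=j$ the number of inversions $I_n$ has the law of $\hI_n + (j-1)$. This yields the identity above, with $J$ independent of $\hI_n$. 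Equivalently, in terms of probability generating functions this is exactly the content of the relation $T(z,q) = \H F(z,q)$ from \eqref{eqn:T-HF}: comparing coefficients of $\frac{z^n}{n!}$ shows that the generating functions $p_n(q)$ of $I_n$ and $\hat{p}_n(q)$ of $\hI_n$ satisfy $p_n(q) = \hat{p}_n(q) \cdot \frac{1 + q + \cdots + q^{n-1}}{n}$, and the factor $\frac{1}{n}(1 + q + \cdots + q^{n-1})$ is precisely the probability generating function of $J-1$.

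Having the identity, I would expand by independence,
\[
 \EE{I_n^{r}} = \sum_{\ell=0}^{r} \binom{r}{\ell} \EE{\hI_n^{\ell}} \, \EE{(J-1)^{r-\ell}}.
\]
Here $J-1$ is uniform on $\setfromto{0}{n-1}$, so trivially $\EE{(J-1)^{s}} = \OO{n^{s}}$, while Lemma~\ref{lem:hI_n} gives $\EE{\hI_n^{\ell}} = \Theta(n^{3\ell/2})$. Thus the term indexed by $\ell$ is of order $n^{3\ell/2 + (r-\ell)} = n^{r + \ell/2}$, which is uniquely maximized at $\ell = r$, where it equals $\EE{\hI_n^{r}}$ and has order $n^{3r/2}$. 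Every other term has order at most $n^{(3r-1)/2}$ (attained at $\ell = r-1$), i.e.\ a relative error of size $\OO{n^{-1/2}}$ compared with the leading term. Substituting the asymptotics of $\EE{\hI_n^{r}}$ then yields \eqref{eqn:I_n}.

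The computation is essentially routine once the distributional identity is in place, so the only point requiring care---and the main, though modest, obstacle---is the clean derivation of $I_n \stackrel{(d)}{=} \hI_n + (J-1)$ together with the independence of $J$ and $\hI_n$. I would make sure to emphasize that the relabelling bijection is genuinely weight-preserving, as this is what forces $J$ to be uniform and independent of the inversion count inherited from $\hT_n$; the matching orders of magnitude---$n^{3/2}$ for $\hI_n$ against $n$ for $J-1$---then automatically produce the stated $\OO{n^{-1/2}}$ relative error.
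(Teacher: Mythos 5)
Your proposal is correct and takes essentially the same route as the paper: the paper's proof rests on the moment relation $\EE{I_n^{r}} = \frac{1}{n}\bigl( \EE{\hI_n^{r}} + \EE{(\hI_n+1)^{r}} + \ldots + \EE{(\hI_n+n-1)^{r}} \bigr)$, which is exactly your distributional identity $I_n \stackrel{(d)}{=} \hI_n + (J-1)$ with $J$ uniform and independent, obtained from the same root-label bijection behind \eqref{eqn:T-HF}. The paper then deduces $\EE{I_n^{r}} = \EE{\hI_n^{r}} + \OO{n^{(3r-1)/2}}$ exactly as in your binomial expansion, so your write-up is simply a more explicit version of the same argument.
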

\begin{proof}
 The relation between $\T$ and $\hT$ (compare equation \eqref{eqn:T-HF}) directly translates to the
 following relation between the moments of $I_n$ and $\hI_n$:
 \[
  \EE{I_n^{r}} = \frac{1}{n} \left( \EE{\hI_n^{r}} + \EE{(\hI_n+1)^{r}} + \ldots + \EE{(\hI_n+n-1)^{r}} \right).
 \]
 From this, one easily deduces that $\EE{I_n^{r}} = \EE{\hI_n^{r}} + \OO{n^{\frac{3r-1}{2}}}$,
 and hence \eqref{eqn:I_n} follows directly from Lemma \ref{lem:hI_n}.
\end{proof}

By comparing \eqref{eqn:I_n} with $\mu_r$ in Definition~\ref{def:airy}, we conclude that the 
moments of the normalized random variable $\frac{I_n}{c_{\varphi} n^{3/2}}$ converge to the 
moments of the Airy distribution. 
Due to Lemma \ref{lem:airy}, the convergence in distribution of $\frac{I_n}{c_{\varphi} n^{3/2}}$ 
to an Airy distributed random variable thus follows directly from the theorem of Fr\'echet and 
Shohat.

This finishes the proof of our result on the total number of inversions.

\section{Proof of the results concerning the local behaviour\label{sec5}}

\subsection{The generating functions approach}

A main ingredient in the proof of Theorem~\ref{thm:local} concerning the behaviour of the random variable $I_{n,j}$ is to introduce and study a suitable generating function for the probabilities 
$\mathbb{P}\{I_{n,j}=k\}$, which reflects in a simple way the recursive description of a tree as a root node and its subtrees.
It turns out that the following trivariate generating function is appropriate:
\begin{equation}\label{def:localgf}
  N(z,u,q) := \sum_{m \ge 0} \sum_{j \ge 1} \sum_{k \ge 0} \mathbb{P}\{I_{m+j,j}=k\} T_{m+j} \frac{z^{j-1}}{(j-1)!} \frac{u^{m}}{m!} q^{k}.
\end{equation}

\begin{prop}\label{prop:localgf}
  The generating function $N(z,u,q)$ is given by the following explicit formula:
  \begin{equation*}
    N(z,u,q) = \frac{\varphi(T(z+u))}{1-(z+uq)\varphi'(T(z+u))}.
  \end{equation*}
\end{prop}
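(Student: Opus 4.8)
The plan is to derive the claimed formula for $N(z,u,q)$ directly from the recursive decomposition of a labelled tree into its root and the sequence of subtrees attached to the root, tracking simultaneously the three statistics encoded by the three variables. I first need to understand what each variable marks: in $N(z,u,q)$, the variable $q$ marks the number of inversions induced by the distinguished node $j$, while $z$ and $u$ together split the remaining $n-1 = (j-1)+m$ non-root labels into the $j-1$ labels smaller than $j$ (marked by $z$, via $\frac{z^{j-1}}{(j-1)!}$) and the $m$ labels larger than $j$ (marked by $u$, via $\frac{u^m}{m!}$). The distinguished node $j$ itself is not given its own variable, which is the reason the generating function is bivariate in the size-tracking variables $z,u$ rather than carrying a separate marker for $j$.

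The key combinatorial observation is the following. Fix the tree $T$ of size $n$ and the distinguished label $j$. An inversion of type $(i,j)$ occurs precisely when $i>j$ is an ancestor of $j$; equivalently, counting $I_{n,j}$ amounts to counting, along the path from the root to $j$, how many ancestors carry a label exceeding $j$. I would set up the decomposition by conditioning on the position of $j$ relative to the root. Either $j$ is the root itself (contributing no inversion involving $j$, and corresponding to the factor $\varphi(T(z+u))$ obtained by forgetting the distinction between small and large labels since no ancestor exists), or $j$ lies in one of the root's subtrees. In the latter case the root contributes a factor of $q$ exactly when its label exceeds $j$, i.e.\ when it is among the "large" labels marked by $u$; this is the source of the factor $(z+uq)$ replacing $z+u$ in the denominator, since a large root contributes an inversion (hence $uq$) whereas a small root does not (hence $z$).

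Concretely, the approach is to translate the symbolic equation $\T = \bigcirc * \varphi(\T)$ into an equation for $N$. The subtree containing $j$ receives the full recursive treatment (it again decomposes as root-plus-subtrees, and the distinguished node $j$ lives somewhere inside it), while the sibling subtrees not containing $j$ contribute only ordinary trees, with their labels freely distributed among the small and large classes; this is exactly what produces the substitution $T(z) \to T(z+u)$ throughout, because a node not on the path from the root to $j$ can be small or large with no effect on $I_{n,j}$, so $z$ and $u$ merge additively. Each ancestor of $j$ on the path contributes the geometric-type factor $\frac{1}{1-(z+uq)\varphi'(T(z+u))}$, where $\varphi'$ records the choice of which child-slot continues toward $j$, the factor $(z+uq)$ records whether that ancestor is large (inversion, weight $q$) or small (no inversion), and summing over all path lengths yields the geometric series. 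I would make this precise either by writing a functional equation that $N$ satisfies and checking that the proposed formula solves it, or by directly summing the contributions over all path lengths.

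The main obstacle will be bookkeeping the labels correctly: one must verify that when the non-path subtrees are attached, the exponential generating function structure genuinely combines $z$ and $u$ into $z+u$ (so that the number of small versus large labels in those subtrees is summed over all admissible splits with the right binomial coefficients), while the path itself keeps them distinguished through the $(z+uq)$ factors. Getting the interaction between the root term $\varphi(T(z+u))$ and the path term right — in particular confirming that the root-is-$j$ case and the root-is-an-ancestor case assemble into the single quotient $\frac{\varphi(T(z+u))}{1-(z+uq)\varphi'(T(z+u))}$ — is the delicate point. Once the decomposition is set up cleanly, the verification that the formula satisfies the resulting functional equation should reduce to differentiating the defining relation $T(z)=z\varphi(T(z))$, for which the identity $\D_z T(z) = \frac{\varphi(T(z))}{1-z\varphi'(T(z))}$ (established earlier in the excerpt) will be the essential tool.
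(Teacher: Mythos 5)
Your proposal is correct and follows essentially the same route as the paper: you decompose at the root according to whether the distinguished node is the root (giving $\varphi(T(z+u))$), a small ancestor-bearing root (factor $z$), or a large one (factor $uq$), with $\varphi'$ marking the child containing $j$ and off-path subtrees merging into $T(z+u)$ exactly as in the paper's tricoloured/bicoloured-tree argument; the resulting linear functional equation solves immediately to the stated quotient. The only minor remark is that the final verification needs no differentiation of $T(z)=z\varphi(T(z))$ — the equation is linear in $N$ and solves directly.
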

\begin{proof}
  We will show the functional equation
  \begin{equation}\label{eqn:localgf}
    N(z,u,q) = \varphi(T(z+u)) + z \varphi'(T(z+u)) N(z,u,q) + uq \varphi'(T(z+u)) N(z,u,q),
  \end{equation}
  which is equivalent to the statement of Proposition~\ref{prop:localgf}.
  To do this we introduce specifically tricoloured trees: in each tree $T \in \mathcal{T}$ exactly one node is coloured red, all nodes with a label smaller than the red node are coloured white, whereas all nodes with a label larger than the red node are coloured black. Let us denote by $\mathcal{T}_{C}$ the family of all such tricoloured trees.
  Then in the generating function $N(z,u,q)$ the variable $z$ encodes the white nodes, the variable $u$ encodes the black nodes, whereas $q$ encodes the black ancestors of the red node, i.e.,
  \begin{equation*}
    N(z,u,q) = \sum_{T_{C} \in \mathcal{T}_{C}} w(T_{C}) \frac{z^{\sharp \; \text{white}}}{(\sharp \; \text{white})!} \frac{u^{\sharp \; \text{black}}}{(\sharp \; \text{black})!} 
    q^{\sharp \; \text{black ancestors of red}}.
  \end{equation*} 
  Since the black nodes as well as the white nodes are labelled it is appropriate to use a double exponential generating function.
  
  As auxiliary family we consider specifically bilabelled trees: the nodes in each tree $T \in \mathcal{T}$ are coloured black and white in a way such that each white node has a label smaller than any black node (i.e., all nodes up to a certain label are coloured white, whereas all remaining nodes are coloured black). Let us denote by $\mathcal{T}_{B}$ the set of all such bicoloured trees. The double exponential generating function of bicoloured trees,
  \begin{equation*}
    B(z,u) = \sum_{T_{B} \in \mathcal{T}_{B}} w(T_{B}) \frac{z^{\sharp \; \text{white}}}{(\sharp \; \text{white})!} \frac{u^{\sharp \; \text{black}}}{(\sharp \; \text{black})!},
  \end{equation*}
  can be computed easily. It holds:
  \begin{align}
    B(z,u) & = \sum_{n \ge 1 (0)} \sum_{T \in \mathcal{T}: |T|=n} w(T) \sum_{m=0}^{n} \frac{z^{n-m}}{(n-m)!} \frac{u^{m}}{m!}
    = \sum_{n \ge 0} \sum_{m=0}^{n} \frac{z^{n-m}}{(n-m)!} \frac{u^{m}}{m!} \sum_{T \in \mathcal{T}: |T|=n} w(T)\notag\\
    & = \sum_{n \ge 0} T_{n} \sum_{m=0}^{n} \frac{z^{n-m}}{(n-m)!} \frac{u^{m}}{m!}
    = \sum_{n \ge 0(1)} \frac{T_{n}}{n!} (z+u)^{n} = T(z+u).\label{eqn:Bzu}
  \end{align}
  
  Now we consider the decomposition of a tricoloured tree $T_{C} \in \mathcal{T}_{C}$ into the root node $\rt(T_{C})$ and its $\ell \ge 0$ subtrees $T_{1}, \dots, T_{\ell}$. 
  Thus the degree-weight of the root node is given by $\varphi_{\ell}$.
  Three cases may occur.
  \begin{itemize}
    \item[$(i)$] The root node is the red node. Then the red node does not have black ancestors and all of the subtrees $T_{1}, \dots, T_{\ell}$ are, after order preserving relabellings,
    specifically bicoloured trees, i.e., elements of $\mathcal{T}_{B}$.
    \item[$(ii)$] The root node is a white node. Then the red node is contained in one of the $\ell$ subtrees; let us assume it is $T_{s}$.
    After an order preserving relabelling this subtree is itself an element of $\mathcal{T}_{C}$, whereas all remaining subtrees are, after order preserving relabellings, elements of $\mathcal{T}_{B}$.
    Moreover, the number of black ancestors of the red node in $T_{C}$ is the same as the number of black ancestors of the red node in the subtree $T_{s}$.
    \item[$(iii)$] The root node is a black node. Again the red node is contained in of the $\ell$ subtrees; let us assume it is $T_{s}$.
    After an order preserving relabelling this subtree is an element of $\mathcal{T}_{C}$, whereas all remaining subtrees are, after order preserving relabellings, elements of $\mathcal{T}_{B}$.
    But in this case the number of black ancestors of the red node in $T_{C}$ is one more than the number of black ancestors of the red node in the subtree $T_{s}$.
  \end{itemize}
  Considering all tricoloured trees of $\mathcal{T}_{C}$ and taking into account \eqref{eqn:Bzu}
  the above decomposition leads to the stated equation~\eqref{eqn:localgf} for $N(z,u,q)$:
  \begin{align*}
     N(z,u,q) & = \sum_{\ell \ge 0} \varphi_{\ell} \big(T(z+u)\big)^{\ell} + z \sum_{\ell \ge 0} \ell \varphi_{\ell} \big(T(z+u)\big)^{\ell-1} N(z,u,q) \\
     & \quad \mbox{} + uq \sum_{\ell \ge 0} \ell \varphi_{\ell} \big(T(z+u)\big)^{\ell-1} N(z,u,q) \\
     & = \varphi(T(z+u)) + z \varphi'(T(z+u)) N(z,u,q) + uq \varphi'(T(z+u)) N(z,u,q).
  \end{align*}
\end{proof}

\subsection{Computing the factorial moments}

Starting with the explicit formula for the trivariate generating function $N(z,u,q)$
given in Proposition~\ref{prop:localgf} we will compute the $r$-th factorial moments
of $I_{n,j}$. According to the definition~\eqref{def:localgf} of $N(z,u,q)$ one obtains:
\begin{align*}
  \mathbb{E}(I_{n,j}^{\underline{r}}) & = \frac{(j-1)! (n-j)!}{T_{n}}
  [z^{j-1} u^{n-j}] \V D_{q}^{r} N(z,u,q)\\
  & = \frac{(j-1)! \, (n-j)! \, r!}{T_{n}} [z^{j-1} u^{n-j-r}]
  \frac{\big(\varphi'(T(z+u))\big)^{r} \varphi(T(z+u))}{\big(1-(z+u)\varphi'(T(z+u))\big)^{r+1}}.
\end{align*}
Since for any power series $g(x)$ it holds:
\begin{equation*}
  [z^{a} u^{b}] g(z+u) = \binom{a+b}{a} [z^{a+b}] g(z),
\end{equation*}
one further obtains the following expression, which will be the starting point for our asymptotic
considerations:
\begin{align}
  \mathbb{E}(I_{n,j}^{\underline{r}}) & = \frac{(j-1)! \, (n-j)! \, r!}{T_{n}} \binom{n-r-1}{j-1} [z^{n-r-1}] \frac{\big(\varphi'(T(z))\big)^{r} \varphi(T(z))}{\big(1-z \varphi'(T(z))\big)^{r+1}} \notag\\
  & = \frac{(j-1)! \, (n-j)! \, r!}{T_{n}} \binom{n-r-1}{j-1}
  [z^{n}] \frac{\big(z \varphi'(T(z))\big)^{r} T(z)}{\big(1-z \varphi'(T(z))\big)^{r+1}}.
  \label{eqn:Injr}
\end{align}

In order to evaluate $\mathbb{E}(I_{n,j}^{\underline{r}})$ asymptotically we use the 
local expansions \eqref{eqn:T_exp} and \eqref{eqn:zphi_exp} and apply singularity analysis.
Again for simplicity in presentation we will only carry out the computations for the
case that the functions involved have $d=1$ dominant singularities (see Subsection~\ref{ssec22}); for $d > 1$ one just has to add the contributions of all these singularities.

We obtain then (for $r$ arbitrary, but fixed):
\begin{align*}
  & [z^{n}] \frac{\big(z \varphi'(T(z))\big)^{r} T(z)}{\big(1-z \varphi'(T(z))\big)^{r+1}}
  = [z^{n}] \frac{\big(1+\mathcal{O}\big(\sqrt{1-\frac{z}{\rho}}\big)\big)^{r} 
  \big(\tau + \mathcal{O}\big(\sqrt{1-\frac{z}{\rho}}\big)\big)}{\big(\sqrt{2\rho\tau\varphi''(\tau)}
  \sqrt{1-\frac{z}{\rho}} + \mathcal{O}(1-\frac{z}{\rho})\big)^{r+1}}\\
  & = [z^{n}] \frac{\tau}{(2 \rho \tau \varphi''(\tau))^{\frac{r+1}{2}}}
  \frac{1}{(1-\frac{z}{\rho})^{\frac{r+1}{2}}} \cdot \big(1+\mathcal{O}\big(\textstyle{\sqrt{1-\frac{z}{\rho}}}\big)\big)
  = \frac{\tau \, n^{\frac{r-1}{2}}}{(2 \rho \tau \varphi''(\tau))^{\frac{r+1}{2}}
  \rho^{n} \Gamma(\frac{r+1}{2})} \cdot \big(1+\mathcal{O}\big(n^{-\frac{1}{2}}\big)\big).
\end{align*}

Together with the asymptotic formula for $T_{n}$ given in \eqref{eqn:T_n}, we obtain 
from \eqref{eqn:Injr} after simple computations:
\begin{align*}
  \mathbb{E}(I_{n,j}^{\underline{r}}) & = \frac{(n-j)! r! (n-r-1)! \sqrt{2 \pi \varphi''(\tau)} n^{\frac{3}{2}} \tau n^{\frac{r-1}{2}}}
  {(n-r-j)! n! \sqrt{\varphi(\tau)} (2 \rho \tau \varphi''(\tau))^{\frac{r+1}{2}} \Gamma(\frac{r+1}{2})} \cdot \big(1+\mathcal{O}\big(n^{-\frac{1}{2}}\big)\big)\\
  & = \frac{r! \sqrt{\pi}}{(2\rho \tau \varphi''(\tau))^{\frac{r}{2}} \Gamma(\frac{r+1}{2})} \frac{(n-j)^{\underline{r}}}{n^{\frac{r}{2}}}
  \cdot \big(1+\mathcal{O}\big(n^{-\frac{1}{2}}\big)\big).
\end{align*}
Using the duplication formula for the Gamma-function:
\begin{equation*}
  \Gamma\big(\frac{r+1}{2}\big) \Gamma\big(\frac{r}{2}+1\big) = \frac{r! \sqrt{\pi}}{2^{r}},
\end{equation*}
we obtain the following expansion of the $r$-th factorial moment of $I_{n,j}$, which holds uniformly for all $1 \le j \le n$:
\begin{equation}\label{eqn:Inj_as}
  \mathbb{E}(I_{n,j}^{\underline{r}}) = \frac{\Gamma(\frac{r}{2}+1) 2^{\frac{r}{2}}}{(\rho \tau \varphi''(\tau))^{\frac{r}{2}}}
  \frac{(n-j)^{\underline{r}}}{n^{\frac{r}{2}}}
  \cdot \big(1+\mathcal{O}\big(n^{-\frac{1}{2}}\big)\big).
\end{equation}

\subsection{Limiting distributions by applying the method of moments}
The asymptotic behaviour of the moments of $I_{n,j}$ depending on the growth of $j=j(n)$ can be obtained easily from the uniform expansion \eqref{eqn:Inj_as}. An application of the method of moments
shows then the limiting distribution results stated in Theorem~\ref{thm:local}.

\subsubsection{Region $n-j \gg \sqrt{n}$}
For this region it holds
\begin{equation*}
  \frac{(n-j)^{\underline{r}}}{n^{\frac{r}{2}}} = \frac{(n-j)^{r}}{n^{\frac{r}{2}}} \cdot \big(1+\mathcal{O}\big(n^{-\frac{1}{2}}\big)\big),
\end{equation*}
which implies the following expansion for the factorial moments:
\begin{equation}\label{eqn:facmomexp}
  \mathbb{E}(I_{n,j}^{\underline{r}}) = \frac{2^{\frac{r}{2}} \Gamma(\frac{r}{2}+1)}{(\rho \tau \varphi''(\tau))^{\frac{r}{2}}} \frac{(n-j)^{r}}{n^{\frac{r}{2}}}
  \cdot \big(1+\mathcal{O}\big(n^{-\frac{1}{2}}\big)\big).
\end{equation}
Together with equation~\eqref{eqn:facordmom} connecting the factorial and the ordinary moments we obtain the following asymptotic expansion for the $r$-th moments of $I_{n,j}$:
\begin{equation*}
  \mathbb{E}(I_{n,j}^{r}) = \frac{2^{\frac{r}{2}} \Gamma(\frac{r}{2}+1)}{(\rho \tau \varphi''(\tau))^{\frac{r}{2}}} \frac{(n-j)^{r}}{n^{\frac{r}{2}}}
  \cdot \Big(1+\mathcal{O}\big(\frac{\sqrt{n}}{n-j}\big)\Big).
\end{equation*}
Thus we obtain, for each $r$ fixed and $n \to \infty$:
\begin{equation*}
   \mathbb{E}\Big(\Big(\frac{\sqrt{n}}{n-j} I_{n,j}\Big)^{r}\Big) \to \Big(\frac{1}{\sqrt{\rho \tau \varphi''(\tau)}}\Big)^{r} 2^{\frac{r}{2}} \, \Gamma\big(\frac{r}{2}+1\big),
\end{equation*}
i.e., the moments of $\frac{\sqrt{n}}{n-j} I_{n,j}$ converge to the moments of a Rayleigh distributed random variable with parameter $\sigma = \frac{1}{\sqrt{\rho \tau \varphi''(\tau)}}$.
An application of the theorem of Fr\'echet and Shohat shows then the corresponding limiting distribution result of Theorem~\ref{thm:local}.

\subsubsection{Region $n-j \sim \alpha \sqrt{n}$, $\alpha \in \mathbb{R}^{+}$}
Also for this region the asymptotic expansion \eqref{eqn:facmomexp} of the $r$-th factorial moments computed above holds and one gets further:
\begin{equation}
   \mathbb{E}(I_{n,j}^{\underline{r}}) \to \frac{2^{\frac{r}{2}} \Gamma(\frac{r}{2}+1)}{(\rho \tau \varphi''(\tau))^{\frac{r}{2}}} \, \alpha^{r}
   = \Big(\frac{\alpha}{\sqrt{\rho \tau \varphi''(\tau)}}\Big)^{r} 2^{\frac{r}{2}} \, \Gamma\big(\frac{r}{2}+1\big).
\end{equation}

To continue we require the following lemma.
\begin{lemma}
  Let $Y_{\gamma}$, with $\gamma > 0$, be a discrete random variable with distribution
  \begin{equation*}
    \mathbb{P}\{Y_{\gamma}=k\} = \frac{\gamma^{k}}{k!} \int_{0}^{\infty} x^{k+1} e^{-\frac{x^{2}}{2} - \gamma x} dx, \quad \text{for} \; k \ge 0.
  \end{equation*}
  
  Then it holds that the $r$-th factorial moments of $Y_{\gamma}$ are given as follows:
  \begin{equation*}
    \mathbb{E}(Y_{\gamma}^{\underline{r}}) = \gamma^{r} \, 2^{\frac{r}{2}} \, \Gamma\big(\frac{r}{2}+1\big).
  \end{equation*}
  Moreover, the distribution of $Y_{\gamma}$ is uniquely defined by its sequence of moments.
\end{lemma}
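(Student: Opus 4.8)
The plan is to obtain the factorial moments by an elementary series cancellation and then to settle determinacy through a finite moment generating function.

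First I would write out
\[
\EE{Y_{\gamma}^{\underline{r}}} = \sum_{k \ge r} \frac{k!}{(k-r)!}\,\frac{\gamma^{k}}{k!} \int_{0}^{\infty} x^{k+1} e^{-\frac{x^{2}}{2}-\gamma x}\,dx,
\]
reindex by $m = k-r$, and interchange the summation with the integral (legitimate by Tonelli, as every summand is nonnegative for $\gamma,x>0$). The inner sum collapses to $\sum_{m\ge 0}\frac{(\gamma x)^{m}}{m!}=e^{\gamma x}$, which exactly cancels the factor $e^{-\gamma x}$, leaving
\[
\EE{Y_{\gamma}^{\underline{r}}} = \gamma^{r}\int_{0}^{\infty} x^{r+1} e^{-\frac{x^{2}}{2}}\,dx.
\]
The substitution $t=x^{2}/2$ turns the remaining integral into $2^{r/2}\int_{0}^{\infty} t^{r/2}e^{-t}\,dt = 2^{r/2}\Gamma\big(\frac{r}{2}+1\big)$, which is precisely the claimed value. (Running the same computation with $r=0$ also confirms that the $\PP{Y_{\gamma}=k}$ sum to $1$, so that $Y_{\gamma}$ is genuinely a probability distribution.)

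For the determinacy I would exploit that $Y_{\gamma}$ is a mixed Poisson variable. Rewriting
\[
\PP{Y_{\gamma}=k} = \int_{0}^{\infty} \frac{(\gamma x)^{k} e^{-\gamma x}}{k!}\, \big(x\,e^{-\frac{x^{2}}{2}}\big)\,dx
\]
displays $Y_{\gamma}$ as $\mathrm{Poisson}(\gamma X)$ mixed over a variable $X$ whose density $x\,e^{-x^{2}/2}$ is exactly the Rayleigh density with parameter $1$ (Definition~\ref{def:rayleigh}). Conditioning on $X$ then gives $\EE{e^{tY_{\gamma}}} = \EE{e^{\gamma(e^{t}-1)X}}$, and since $\int_{0}^{\infty} x\,e^{sx-\frac{x^{2}}{2}}\,dx<\infty$ for every real $s$, this moment generating function is finite for all $t\in\RR$. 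Finiteness of the moment generating function in a neighbourhood of the origin is a classical sufficient condition for the moment problem to be determinate, so the distribution of $Y_{\gamma}$ is uniquely determined by its moments. A more computational alternative would be to invoke Carleman's condition: Stirling's formula shows that $\Gamma(\frac{r}{2}+1)$ grows like $r^{r/2}$ up to exponential factors, so the moments are of order $C^{r}r^{r/2}$ and the series $\sum_{r}\mu_{2r}^{-1/(2r)}$ diverges.

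The moment computation itself is routine once the interchange of sum and integral is justified, so the only genuinely nontrivial point is the uniqueness claim. I expect the mixed-Poisson reformulation to be the cleanest way to reach it, since it reduces the whole determinacy question to the (immediate) everywhere-finiteness of the Rayleigh moment generating function; the Carleman route works too but requires the slightly fussier growth estimate.
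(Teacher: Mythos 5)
Your proof is correct and follows essentially the same route as the paper: the factorial moments are obtained by the identical sum--integral interchange and collapse of the exponential series, and the determinacy is settled, just as in the paper, by showing that the moment generating function $\mathbb{E}(e^{tY_{\gamma}}) = \int_{0}^{\infty} x\, e^{-\frac{x^{2}}{2}-\gamma(1-e^{t})x}\,dx$ is finite for all real $t$. Your mixed-Poisson framing and the Carleman alternative are nice additional observations but do not change the substance of the argument.
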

\begin{proof}
  For $r \ge 0$ we get (the case $r=0$ shows that the probabilities sum up to $1$, i.e., they define indeed a distribution):
  \begin{align*}
    \mathbb{E}(Y_{\gamma}^{\underline{r}}) & = \sum_{k \ge 0} k^{\underline{r}} \, \frac{\gamma^{k}}{k!} \int_{0}^{\infty} x^{k+1} e^{-\frac{x^{2}}{2} - \gamma x} dx
    = \int_{0}^{\infty} e^{-\frac{x^{2}}{2} - \gamma x} \gamma^{r} x^{r+1} \sum_{k \ge r} \frac{(\gamma x)^{k-r}}{(k-r)!} dx\\
    & = \int_{0}^{\infty} e^{-\frac{x^{2}}{2} - \gamma x} \gamma^{r} x^{r+1} e^{\gamma x} dx
    = \gamma^{r} \int_{0}^{\infty} x^{r+1} e^{-\frac{x^{2}}{2}} dx = 2^{\frac{r}{2}} \gamma^{r} \int_{0}^{\infty} u^{\frac{r}{2}} e^{-u} du\\
    & = \gamma^{r} \, 2^{\frac{r}{2}} \, \Gamma\big(\frac{r}{2}+1\big).
  \end{align*}
  
  To show that the sequence of moments uniquely characterizes the distribution we consider the moment generating function $F(s) := \mathbb{E}(e^{s Y_{\gamma}})$ of $Y_{\gamma}$.
  It can be shown easily that $F(s)$ is given by the following expression:
  \begin{equation*}
    F(s) = \sum_{k \ge 0} \mathbb{P}\{Y_{\gamma} = k\} e^{k s} = \int_{0}^{\infty} x e^{-\frac{x^{2}}{2}-\beta x} dx, \quad \text{with} \enspace \beta = \gamma (1-e^{s}).
  \end{equation*}
  Thus the moment generating function $F(s)$ exists in a real neighbourhood of $s=0$ (actually it exists for all real $s$), which implies that the corresponding distribution is uniquely defined by its moments.
\end{proof}
Since the $r$-th factorial (and thus also ordinary) moments of $I_{n,j}$ converge to the corresponding moments of $Y_{\gamma}$, with $\gamma = \frac{\alpha}{\sqrt{\rho \tau \varphi''(\tau)}}$,
an application of the theorem of Fr\'echet and Shohat shows also for this case the limiting distribution results stated in Theorem~\ref{thm:local}.

\subsubsection{Region $n-j \ll \sqrt{n}$}

From \eqref{eqn:Inj_as} one easily gets that $\mathbb{E}(I_{n,j}^{r}) \to 0$, for $r \ge 1$, which, by an application of the theorem of Fr\'echet and Shohat shows $I_{n,j} \convd 0$ as stated in the corresponding part of Theorem~\ref{thm:local}.

\subsection{Explicit formulas for probabilities}

For some particular tree families it is possible to obtain explicit formulas for the probabilities $\mathbb{P}\{I_{n,j}=k\}$ by extracting coefficients from the trivariate generating function
$N(z,u,q)$ as given in \eqref{eqn:localgf}.
E.g., for ordered and unordered trees ($\varphi(t) = \frac{1}{1-t}$ and $\varphi(t) = e^{t}$, respectively) the generating function $N(z,u,q)$ is given by the following expressions:
\begin{align*}
   N(z,u,q) & = \frac{1-T(z+u)}{(1-T(z+u))^{2} - z - uq}, & \enspace \text{with} \enspace T(z) & = \frac{z}{1-T(z)} \quad \text{(ordered trees)},\\
   N(z,u,q) & = \frac{e^{T(z+u)}}{1-(z+uq)e^{T(z+u)}}, & \enspace \text{with} \enspace T(z) & = z \, e^{T(z)} \quad \text{(unordered trees)}.
\end{align*}
We omit here the necessary computations for extracting coefficients of $N(z,u,q)$ in order to obtain the required probabilities via
\begin{equation*}
  \mathbb{P}\{I_{n,j}=k\} = \frac{(j-1)! (n-j)!}{n!} \frac{[z^{j-1} u^{n-j} q^{k}] N(z,u,q)}{[z^{n}] T(z)},
\end{equation*}
but we stated the corresponding results in Subsection~\ref{ssec34} as formulas \eqref{eqn:InjOrd} and \eqref{eqn:InjUnord}.

 
\bibliography{Inversions}{}

\begin{thebibliography}{10}

\bibitem{Fill-Flajolet-Kapur_SingularityAnalysis}
J.~A. Fill, P.~Flajolet, and N.~Kapur.
\newblock Singularity analysis, {H}adamard products, and tree recurrences.
\newblock {\em Journal of Computational and Applied Mathematics}, 174:271--313,
  2005.

\bibitem{Flajolet-Louchard_AiryDistribution}
P.~Flajolet and G.~Louchard.
\newblock Analytic variations on the {Airy} distribution.
\newblock {\em Algorithmica}, 31:361--377, 2001.

\bibitem{Flajolet-Odlyzko_SingularityAnalysis}
P.~Flajolet and A.~M. Odlyzko.
\newblock Singularity analysis of generating functions.
\newblock {\em SIAM Journal on Discrete Mathematics}, 3:216--240, 1990.

\bibitem{Flajolet-Poblete-Viola_LinearProbingHashing}
P.~Flajolet, P.~V. Poblete, and A.~Viola.
\newblock On the analysis of linear probing hashing.
\newblock {\em Algorithmica}, 22:490--515, 1998.

\bibitem{Flajolet-Sedgewick_AnalyticCombinatorics}
P.~Flajolet and R.~Sedgewick.
\newblock {\em Analytic Combinatorics}.
\newblock Cambridge University Press, Cambridge, 2009.

\bibitem{Gessel-Sagan-Yeh_TreeInversions}
I.~M. Gessel, Y.-N. Yeh, and B.~E. Sagan.
\newblock Enumeration of trees by inversions.
\newblock {\em Journal of Graph Theory}, 19:435--459, 1995.

\bibitem{Kung-Yan_SumsParkingFunctions}
J.~P.~S. Kung and C.~Yan.
\newblock Exact formulas for moments of sums of classical parking functions.
\newblock {\em Advances in Applied Mathematics}, 31:415--441, 2003.

\bibitem{Loeve_ProbabilityTheory}
M.~Lo\`{e}ve.
\newblock {\em Probability Theory I}.
\newblock Springer, New York, 4th edition, 1977.

\bibitem{Louchard_BrownianExcursion}
G.~Louchard.
\newblock Kac's formula, {L}\'evy's local time and {B}rownian excursion.
\newblock {\em Journal of Applied Probability}, 21:479--499, 1984.

\bibitem{Louchard-Prodinger_InversionsPermutations}
G.~Louchard and H.~Prodinger.
\newblock The number of inversions in permutations: a saddle point approach.
\newblock {\em Journal of Integer Sequences}, 6:Article 03.2.8, 2003.

\bibitem{Mallows-Riordan_InversionEnumerator}
C.~Mallows and J.~Riordan.
\newblock The inverson enumerator for labeled trees.
\newblock {\em Bulletin of the American Mathematical Society}, 74:92--94, 1968.

\bibitem{Meir-Moon}
A.~Meir and J.~W. Moon.
\newblock On the altitude of nodes in random trees.
\newblock {\em Canadian Journal of Mathematics}, 30:997--1015, 1978.

\bibitem{Panholzer_AncestorTree}
A.~Panholzer.
\newblock The distribution of the size of the ancestor-tree and of the induced
  spanning subtree for random trees.
\newblock {\em Random Structures and Algorithms}, 25:179--207, 2004.

\bibitem{Schwerdtfeger-Richard-Thatte_StaircasePolygons}
U.~Schwerdtfeger, C.~Richard, and B.~Thatte.
\newblock Area limit laws for symmetry classes of staircase polygons.
\newblock {\em Combinatorics, Probability and Computing}, 19:441--461, 2010.

\end{thebibliography}
\bibliographystyle{plain}

\end{document}